\newtheorem{thm}{Theorem}[section]
\newtheorem{cor}{Corollary}[section]
\newtheorem{rmk}{Remark}[section]
\newtheorem{lma}{Lemma}[section]
\newtheorem{assu}{Assumption}[section]
\newtheorem{exm}{Example}[section]
\newtheorem{alg}{Algorithm}[section]
\newcommand{\norm}[1]{\left|  #1\right|}
\newcommand{\bnorm}[1]{\big| #1 \big|}
\newcommand{\Ex}{\mathbb{E}}
\newcommand{\ip}[1]{\left< #1 \right>}
\newcommand{\sm}[1]{\left[ \begin{smallmatrix} #1 \end{smallmatrix} \right]}
\newcommand{\bm}[1]{ \begin{bmatrix} #1 \end{bmatrix}}
\newcommand{\oX}{\overline{X}}
\newcommand{\uX}{\underline{X}}
\numberwithin{equation}{section}
\font\eka=cmex10
\def\ind{\mathrel{\hbox{\rlap{%
\hbox to 7.5pt{\hrulefill}}\raise6.6pt\hbox{\eka\char'167}}}}
\begin{document}
\title[Continuous-time GP dynamics in GRN inference]{Continuous time Gaussian process dynamical models in gene regulatory network inference*}

\author[Aalto]{Atte Aalto$^{1}$} 
\address{1: Luxembourg Centre for Systems Biomedicine, University of Luxembourg; 6 avenue du Swing; 4367 Belvaux; Luxembourg}
\email{atte.aalto@uni.lu}

\author[Viitasaari]{Lauri Viitasaari$^{2}$}
\address{2: Department of Mathematics and Statistics, University of Helsinki; P.O. Box 68, Gustaf H\"allstr\"omin katu 2b; 00014 Helsinki; Finland}
\email{lauri.viitasaari@helsinki.fi}

\author[Ilmonen]{Pauliina Ilmonen$^{3}$}
\address{3: Department of Mathematics and Systems Analysis, Aalto University School of Science; P.O. Box 11100; 00076 Aalto; Finland}
\email{pauliina.ilmonen@aalto.fi}

\author[Mombaerts]{Laurent Mombaerts$^{1}$}
\thanks{Funding: AA is funded by ERANET for Systems Biology ERASysApp and Luxembourg National Research Fund, project CropClock, grant reference INTER/SYSAPP/14/02.} 
\email{laurent.mombaerts@uni.lu}

\author[Gon\c{c}alves]{Jorge Gon\c{c}alves$^{1}$}
 \thanks{*This is a preprint version of an article published with title ``Gene regulatory network inference from sparsely sampled noisy data'' in \emph{Nature Communications}, 11: 3493 (2020). Please cite the journal-version instead of the arXiv preprint.}
\email{jorge.goncalves@uni.lu}

\begin{abstract}

One of the focus areas of modern scientific research is to reveal mysteries related to genes and their interactions. The dynamic interactions between genes can be encoded into a gene regulatory network (GRN), which can be used to gain understanding on the genetic mechanisms behind observable phenotypes. GRN inference from time series data has recently been a focus area of systems biology. Due to low sampling frequency of the data, this is a notoriously difficult problem. We tackle the challenge by introducing the so-called continuous-time Gaussian process dynamical model, based on Gaussian process framework that has gained popularity in nonlinear regression problems arising in machine learning. The model dynamics are governed by a stochastic differential equation, where the dynamics function is modelled as a Gaussian process. We prove the existence and uniqueness of solutions of the stochastic differential equation. We derive the probability distribution for the Euler discretised trajectories and establish the convergence of the discretisation. We develop a GRN inference method called BINGO, based on the developed framework. BINGO is based on MCMC sampling of trajectories of the GPDM and estimating the hyperparameters of the covariance function of the Gaussian process. Using benchmark data examples, we show that BINGO is superior in dealing with poor time resolution and it is computationally feasible. 



\end{abstract}

\maketitle

\section{Introduction}

In 2017, Jeffrey C. Hall, Michael Rosbash, and Michael W. Young were awarded the Nobel prize in physiology or medicine for their discoveries of molecular mechanisms controlling the circadian rhythm of plants. Indeed, one of the focus areas of modern scientific research is to reveal mysteries related to genes, their interactions, and their connection to observable phenotypes. Application areas of analysing interactions of genes are not limited to plants and their circadian clocks. In the field of biomedicine, for example, knowledge of genes and their interactions plays a crucial role in prevention and cure of diseases.

Interactions between genes are typically represented as a gene regulatory network (GRN) whose nodes correspond to different genes, and a directed edge denotes a direct causal effect of some gene on another gene. The usual problem statement is to infer the network topology from given gene expression data.
Classically, GRN inference has been based on analysing steady state data corresponding to gene knockout experiments, based on silencing one gene and observing changes in the steady state expressions of other genes. However, carrying out knockout experiments on a high number of genes is costly and technically infeasible. Moreover, for example circadian clocks of plants are oscillating systems, and in practice it can be difficult to determine whether a particular measurement corresponds to a steady state. In contrast, methods that infer GRNs from time series data can infer the networks on a more global level using data from few experiments. Therefore GRN inference from time series data has gained more and more attention recently. This article presents BINGO (Bayesian Inference of Networks using Gaussian prOcess dynamical models). BINGO is designed for network inference mainly from time series data, but steady state measurements can be straightforwardly implemented as well.

We model the time series data $\{y_j\}_{j=0}^m$ as samples from a continuous trajectory, 
\begin{equation} \label{eq:meas}
y_j=x(t_j)+v_j
\end{equation}
 where $v_j$ represents measurement noise. The continuous trajectory is assumed to satisfy a nonlinear stochastic differential equation
\begin{equation} \label{eq:sde_intro}
dx = f(x)dt + dw,
\end{equation}
where $w$ is some driving process noise. Here $x$ is an $\mathbb{R}^n$-valued function, and thus also $f$ is vector-valued, 
\[ f(x)=\sm{f_1(x_1,...,x_n) \\ \vdots\\f_n(x_1,...,x_n)}. \] 
 If function $f_j$ depends on $x_i$, in the corresponding GRN there is a link from gene $i$ to gene $j$. The task in GRN inference is to discover this regulatory interconnection structure between variables. It is known that the dynamics of one gene can only be influenced by few other genes, that is, a component $f_j$ only depends on some components of $x$.

What is characteristic to the GRN inference problem is that the data tends to be rather poor in terms of temporal resolution and the overall amount of data. Low temporal resolution has a deteriorating effect on network inference---in linear systems ($f(x
)=Ax$ in \eqref{eq:sde_intro}) this is illustrated by the fact that matrices $A$ and $e^{A\Delta T}$ do not share even approximatively the same sparsity pattern when $\Delta T$ is big. In addition, many methods use derivatives estimated directly from the time series data using difference approximations or curve fitting techniques. Using different techniques can significantly effect the results, and therefore avoiding such derivative approximations is desirable.

In this article, we introduce a continuous-time version of the so-called Gaussian process dynamical model (GPDM) \cite{GPDM_long}. Essentially, the dynamics function $f$ in \eqref{eq:sde_intro} is modelled as a Gaussian process with some covariance function \cite{GP_book}. This defines $x$ as a stochastic process. We prove existence and uniqueness of solutions of the corresponding differential equation, and the convergence of the Euler discretisation. We derive a probability distribution for the discretised trajectory, enabling direct MCMC sampling of realisations of this process. This way the derivative approximations are avoided. This trajectory sampling is illustrated in Figure~\ref{fig:traj_samples_intro}, showing samples from $p(x|\theta,Y) \propto p(y|x,\theta)p(x|\theta)$, where $\theta$ represents hyperparameters, and $Y$ comprises the time series measurements. Then $p(y|x,\theta)$ is the measurement model arising from \eqref{eq:meas}, and $p(x|\theta)$ is the probability distribution for the trajectory $x$, arising from \eqref{eq:sde_intro}. Network inference is then done by estimating the hyperparameters of the chosen covariance function. The technique of performing variable selection based on estimating variable-specific hyperparameters is known as \emph{automatic relevance determination} (ARD) \cite{MacKay,Neal_ARD}. In our approach, missing measurements as well as non-constant sampling frequency are easy to treat, and these functionalities are already implemented in the code, as well as a possibility to include prior information on the network. In the end, we are mainly interested in which of the variable-specific hyperparameters are non-zero. Therefore we introduce an indicator variable matrix for these hyperparameters, that can also be interpreted as the adjacency matrix of the GRN. The pipeline of BINGO is illustrated in Figure~\ref{fig:pipeline}. We demonstrate that BINGO is superior in dealing with poor time resolution while still remaining computationally feasible.

\begin{figure}[t]
\center
\includegraphics[width=12cm]{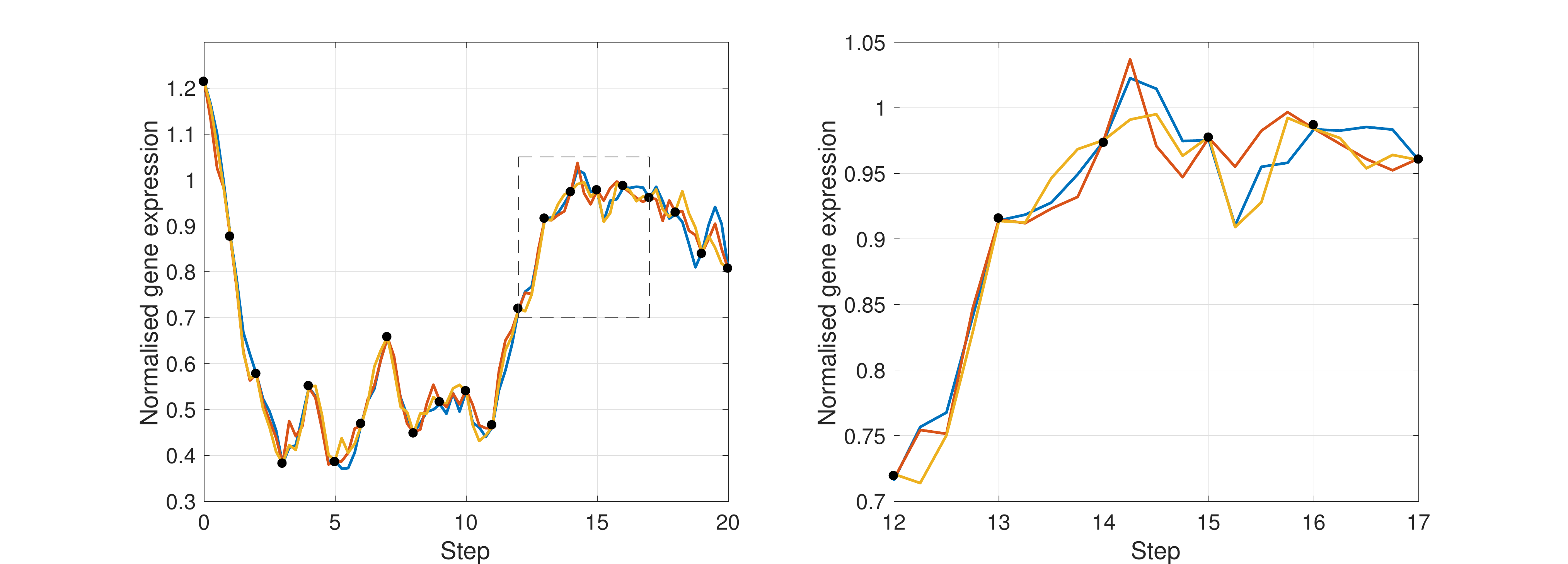} 
\caption{Three trajectory samples together with the discrete measurements from one of the DREAM4 datasets. The right panel is an enlargement of the rectangle on the left panel.}
\label{fig:traj_samples_intro}\end{figure}

Linear version of the method is presented in \cite{Bayesian_var}, where the MCMC samplers for the trajectory and the network topology are introduced. 
Other methods that model the dynamics function using a Gaussian process are presented in \cite{Aijo,Klemm,CSI_imp,Penfold_review}. The main difference between BINGO and these older methods is that they all treat the problem as a nonlinear regression problem with input-output pairs \cite{Aijo} 
\[ 
\left\{\left( y_{j-1} , \frac{y_{j}-y_{j-1}}{t_{j}-t_{j-1}} \right)\right\}_{j=1}^{m}
\]  or using some other method to approximate the derivatives from the time series data. As mentioned above, we avoid this derivative estimation by fitting continuous-time trajectories to the time series data. It should be noted that the GP framework can handle combinatorial effects, meaning nonlinearities that cannot be decomposed into $f(x_1,x_2)=f_1(x_1)+f_2(x_2)$. This is an important property for modelling a chemical system---such as gene expression---where reactions can happen due to combined effects of reactant species. For example, the dynamics corresponding to a chemical reaction $x_1+x_2 \to x_3$ cannot be modelled by $\dot x_3 = f_1(x_1)+f_2(x_2)$.

\begin{figure}
\mbox{\hspace{-28mm}
\includegraphics[width=18cm]{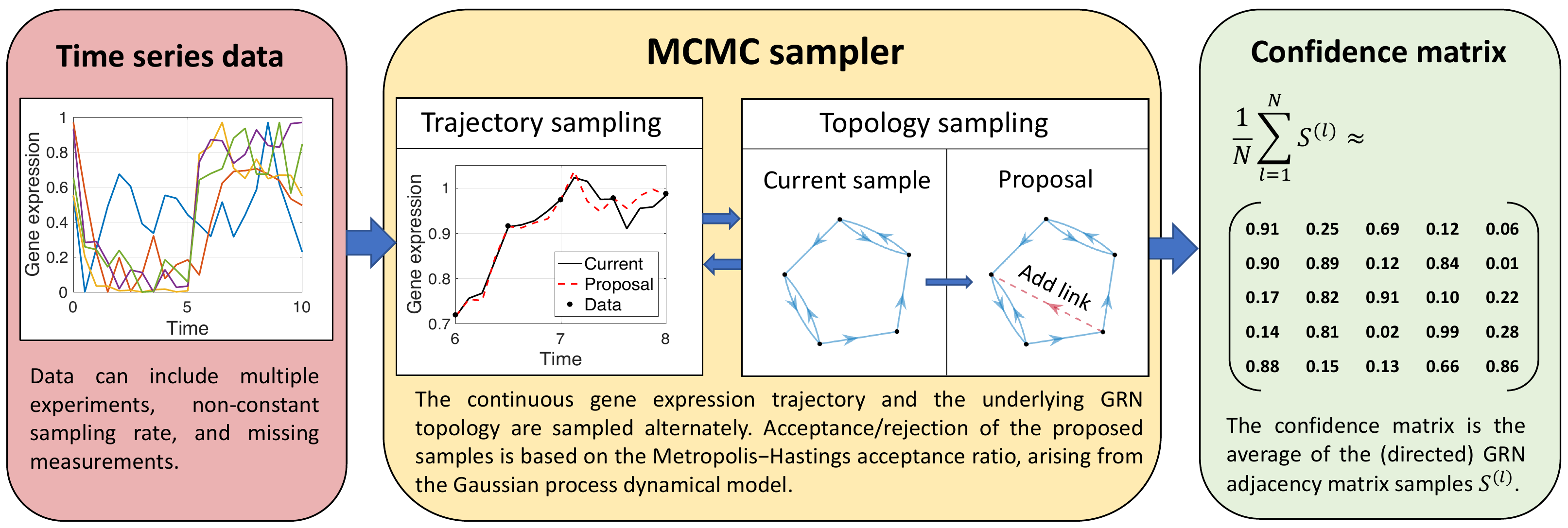}}
\caption{The BINGO pipeline: A proposal trajectory sample $\hat x$ is drawn by perturbing slightly the current sample $x^{(l)}$. The proposal is accepted ($x^{(l+1)}=\hat x$) or rejected ($x^{(l+1)}=x^{(l)}$) based on the Metropolis--Hastings acceptance ratio. A GRN topology proposal $\hat S$ is constructed by adding or removing one link to/from the current topology $S^{(l)}$.}
\label{fig:pipeline}
\end{figure}

Several GRN inference problems from different types of data have been posed as competitive challenges by the \emph{Dialogue for Reverse Engineering Assessments and Methods} (DREAM) project. Results and conclusions, as well as some top performers are introduced in \cite{GRN_review,DREAM_community}.  The inference problems are based mainly on two types of data, namely time series data, and steady state measurements corresponding to gene knockout or knockdown experiments. 
A review \cite{Penfold_review} on methods based on time series data concluded that methods based on nonparametric nonlinear differential equations performed best. Such methods include Gaussian process models 
and random forest models \cite{dynGENIE3}. Other types of ODE models tend to make rather restrictive assumptions on the dynamics, such as linear dynamics \cite{Bayesian_var,Kojima,TSNI}, or a library of nonlinear functions  \cite{SINDy,ARNI,Sparse_Id,SINDy_GRN}. Mechanistic models \cite{Aderhold_mechanistic,Oates_mechanistic} try to fit the data using dynamical systems constructed from enzyme kinetics equations.

The rest of the article is organised as follows. In Section~\ref{sec:GPDM}, we introduce the continuous-time Gaussian process dynamical models, and we prove the existence and uniqueness of solutions, and the convergence of the Euler discretisation. This is applied in Section~\ref{sec:distribution} where we derive the probability distribution for the discretised trajectory. The full network inference method BINGO is introduced in Section~\ref{sec:GRN} including incorporating several time series and knockout/knockdown experiments.
Section \ref{sec:results} is devoted to benchmark data experiments. We apply BINGO to the DREAM4 In Silico Network Challenge data \cite{GRN_review,DREAM_data,DREAM_models,DREAM_web} using either the time series data only, or including also the steady state experiments. The method is compared with the best performers in the DREAM4 challenge, as well as with more recent methods. Moreover, BINGO's performance with low sampling frequency is demonstrated by performing network inference on simulated data of the circadian clock of {\it Arabidopsis thaliana} \cite{Millar10}, using different sampling frequencies. Finally, to demonstrate BINGO's performance using real data, it has been applied to the IRMA in vivo dataset \cite{IRMA}, which is obtained from a synthetic network of five genes, and therefore the ground truth network is known. In Section~\ref{sec:discussion}, we provide a short summary and discuss future prospects.


\section{Continuous-time Gaussian process dynamical models} \label{sec:GPDM}

Discrete-time GPDMs (a.k.a. Gaussian process state space models \cite{Frigola_GPSS,GPSS_id}) were originally introduced in \cite{GPDM_long}, whose treatment was based on the GP latent variable models \cite{latent}. They are an effective tool for analysing time series data that is produced by a dynamical system that is unknown to us, or somehow too complicated to be presented using classical modelling techniques. In the original paper \cite{GPDM_long}, the method was used for human motion tracking from video data. Motion tracking problems remain the primary use of GPDMs \cite{GP_switch,GP_hand}, but other types of applications have emerged as well, such as speech analysis \cite{GPDM_speech}, traffic flow prediction \cite{GPtraffic}, and electric load prediction \cite{GP_elec}.

In this section we study theoretical properties of the continuous time GPDM trajectory defined as the solution $x \in \mathbb{R}^n$ on $t\in[0,T]$ for some fixed  $T$ to the stochastic differential equation 
\begin{equation} \label{eq:sde}
dx_t=f(u_t,x_t,\omega)dt+dw_t, \qquad x(0)=x_0,
\end{equation}
where the initial state $x_0$ is normally distributed, $x_0 \sim \mathcal{N}(m,P_0)$ for some covariance matrix $P_0$, $u_t$ is a smooth deterministic input function, and $w_t$ is an $n$-dimensional Brownian motion with diagonal covariance matrix $Q=\textup{diag}(q_1,...,q_n)$. Finally, $f=[f_1,...,f_n]^{\top}$ where each component $f_i=f_i(u,x,\omega)$ conditioned on a trajectory $x$ is modelled as a Gaussian process. For simplicity, we assume that each $f_i$ is centred (see Remark \ref{remark:non-centred}) and has a covariance $k_i$ depending on $u$ (input variable) and $x$ (state variable). That is, $\Ex f_i(u,x,\omega) = 0$ and $\Ex f_i(u,x,\omega)f_i(v,z,\omega) = k_i(u,v,x,z)$. 
\begin{rmk}
By Mercer's theorem, each covariance $k$ can be represented as 
$$
k(u,v,x,z) = \sum_{k=1}^\infty \lambda_k^2 \phi_k(u,x)\phi_k(v,z).
$$
Hence a Gaussian $f$ with covariance $k$ can be modelled by 
$$
f(u,x) = \sum_{k=1}^\infty \phi_k(u,x)\xi_k,
$$
where $\xi_k \sim \mathcal{N}(0,\lambda_k^2)$ are mutually independent. From this it is clear that for given $x$, $f(u,x)$ is Gaussian, whereas for random $x$, it is usually not.
\end{rmk}

Throughout the article we make the following assumption on the covariances $k_i$. 
\begin{assu}
\label{ass:cov}
For every $i=1,\ldots,n$, there exists a constant $L_i$ such that  
$$
|k_i(u_t,u_t,x,x)-k_i(u_t,u_t,x,z)| \leq L_i|x-z|^2
$$
uniformly in $t\in[0,T]$. 
\end{assu} 
\begin{exm}
The GRN inference algorithm developed below is based on the squared exponential covariance functions 
\begin{equation} \label{eq:sq_exp}
k_i(x,z)=\gamma_i \exp\left( -\sum_{j=1}^n \beta_{i,j} (x_j-z_j)^2 \right)
\end{equation}
and estimating the hyperparameters $\beta_{i,j}$. The hyperparameters satisfy $\gamma_i >0$ and $\beta_{i,j} \ge 0$. If $\beta_{i,j} >0$, it indicates that gene $j$ is a regulator of gene $i$. The Assumption \ref{ass:cov} is satisfied with the constant $L_i = \gamma_i\max_{1\leq j\leq n}\beta_{i,j}$. 
\end{exm}
Before stating and proving existence and uniqueness result for \eqref{eq:sde} we need one technical lemma. 
\begin{lma}
\label{lma:f_expectation_bound}
Suppose that Assumption \ref{ass:cov} holds and let $x,z\in \mathbb{R}^n$ be arbitrary. Then for any $p\geq 1$ there exists a constant $C$ depending on $p$ and the numbers $L_1,\ldots L_n$ such that 
$$
\Ex|f(u_t,x,\omega)-f(u_t,z,\omega)|^p \leq C|x-z|^p.
$$
\end{lma}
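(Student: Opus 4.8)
The plan is to reduce this vector-valued $L^p$ estimate to one-dimensional Gaussian moment computations. Fix $t\in[0,T]$ and $x,z\in\mathbb{R}^n$, and set $\Delta_i:=f_i(u_t,x,\omega)-f_i(u_t,z,\omega)$ for $i=1,\ldots,n$. Since each $f_i$, conditioned on a trajectory, is a centred Gaussian process in its state argument, the pair $(f_i(u_t,x,\omega),f_i(u_t,z,\omega))$ is jointly Gaussian, so $\Delta_i$ is a centred scalar Gaussian and $\Ex|\Delta_i|^p = c_p\,(\Ex\Delta_i^2)^{p/2}$ with $c_p:=\Ex|N|^p$, $N\sim\mathcal{N}(0,1)$. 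Thus the whole estimate reduces to bounding the variances $\Ex\Delta_i^2$.

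To bound the variance I would expand, using $\Ex f_i(u_t,x,\omega)f_i(u_t,z,\omega)=k_i(u_t,u_t,x,z)$ and the symmetry $k_i(u_t,u_t,x,z)=k_i(u_t,u_t,z,x)$ of the covariance, to get
\[ \Ex\Delta_i^2 = \bigl(k_i(u_t,u_t,x,x)-k_i(u_t,u_t,x,z)\bigr) + \bigl(k_i(u_t,u_t,z,z)-k_i(u_t,u_t,z,x)\bigr). \]
Applying Assumption~\ref{ass:cov} to each bracket --- the second one after interchanging the roles of $x$ and $z$ --- yields $\Ex\Delta_i^2 \le 2L_i|x-z|^2$, uniformly in $t\in[0,T]$.

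For the assembly, note $|f(u_t,x,\omega)-f(u_t,z,\omega)|^2=\sum_{i=1}^n\Delta_i^2$. When $p\ge 2$ I would apply Minkowski's inequality in $L^{p/2}$:
\[ \Bigl(\Ex|f(u_t,x,\omega)-f(u_t,z,\omega)|^p\Bigr)^{2/p} = \Bigl\|\textstyle\sum_{i=1}^n\Delta_i^2\Bigr\|_{L^{p/2}} \le \sum_{i=1}^n\bigl(\Ex|\Delta_i|^p\bigr)^{2/p} \le \sum_{i=1}^n c_p^{2/p}\,(2L_i)\,|x-z|^2, \]
which, after raising to the power $p/2$, gives $\Ex|f(u_t,x,\omega)-f(u_t,z,\omega)|^p \le c_p\bigl(2\sum_{i=1}^n L_i\bigr)^{p/2}|x-z|^p$. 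When $1\le p<2$, monotonicity of $L^p$-norms gives the same conclusion with constant $\bigl(2\sum_{i=1}^n L_i\bigr)^{p/2}$ directly from $\Ex|f(u_t,x,\omega)-f(u_t,z,\omega)|^2=\sum_i\Ex\Delta_i^2\le 2|x-z|^2\sum_i L_i$. Taking $C:=\max\{1,c_p\}\bigl(2\sum_{i=1}^n L_i\bigr)^{p/2}$ completes the argument, and this $C$ depends only on $p$ and $L_1,\ldots,L_n$ as claimed.

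I do not anticipate a genuine obstacle here; the two points that need a little care are (i) using the symmetry of $k_i$ to convert the one-sided bound of Assumption~\ref{ass:cov} into the two-sided variance estimate, and (ii) recalling that $\Delta_i$ is Gaussian precisely because the process is jointly --- not merely marginally --- Gaussian in the state variable. The moment-assembly step is then routine.
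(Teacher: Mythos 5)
Your proof is correct and follows essentially the same route as the paper's: reduce to the variance via Gaussianity of the increments, expand $\Ex\Delta_i^2$ in terms of the covariance, and apply Assumption~\ref{ass:cov} (twice, using symmetry) to get the bound $2L_i|x-z|^2$. You are merely more explicit than the paper about the moment-equivalence constant $c_p$ and the assembly of the components via Minkowski's inequality, where the paper simply invokes ``$f$ is a Gaussian vector'' and the triangle inequality.
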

\begin{proof}
Since $f$ is a Gaussian vector, it suffices to prove the claim only for $p=2$. Furthermore, by triangle inequality, it suffices to prove that for each component $f_i$ we have
$$
\Ex|f_i(u_t,x,\omega)-f_i(u_t,z,\omega)|^2 \leq C|x-z|^2.
$$
Now 
$$
\Ex|f_i(u_t,x,\omega)-f_i(u_t,z,\omega)|^2 = k_i(u_t,u_t,x,x) + k_i(u_t,u_t,z,z) - 2k(u_t,u_t,x,z),
$$
and Assumption \ref{ass:cov} implies 
$$
\Ex|f_i(u_t,x,\omega)-f_i(u_t,z,\omega)|^2 \leq 2|x-z|^2
$$
which concludes the proof.
\end{proof}
\begin{cor}
\label{cor:continuity}
Suppose that Assumption \ref{ass:cov} holds and let $x,z\in \mathbb{R}^n$ be random variables. Then for any $p\geq 1$ there exists a constant $C$ depending on $p$ and the numbers $L_1,\ldots L_n$ such that 
$$
\Ex\left(|f(u_t,x,\omega)-f(u_t,z,\omega)|^p \, | \, x,z\right) \leq C|x-z|^p.
$$
\end{cor}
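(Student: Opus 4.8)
The plan is to \emph{freeze} the random arguments $x$ and $z$ and reduce everything to Lemma~\ref{lma:f_expectation_bound}. Introduce the deterministic function
$$
h_p(a,b) := \Ex\,|f(u_t,a,\omega)-f(u_t,b,\omega)|^p, \qquad a,b\in\mathbb{R}^n ,
$$
which by Lemma~\ref{lma:f_expectation_bound} satisfies $h_p(a,b)\leq C|a-b|^p$ with $C$ depending only on $p$ and $L_1,\dots,L_n$. The corollary then amounts to the identity
$$
\Ex\big(|f(u_t,x,\omega)-f(u_t,z,\omega)|^p \,\big|\, x,z\big) = h_p(x,z)\qquad\text{a.s.} ,
$$
since combining this with the bound on $h_p$ gives the claim at once.

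To establish the identity I would first set up the objects carefully: view $y\mapsto f(u_t,y,\omega)$ as a random field, for instance through the Mercer/Karhunen--Loève expansion from the remark above, $f_i(u,y,\omega)=\sum_k \phi_k^{(i)}(u,y)\,\xi_k^{(i)}$ with independent centred Gaussian coefficients $\xi_k^{(i)}$. Conditionally on $\{x=a,\,z=b\}$ the random variable $|f(u_t,x,\omega)-f(u_t,z,\omega)|^p$ has the same law as the unconditional $|f(u_t,a,\omega)-f(u_t,b,\omega)|^p$; taking conditional means (a standard disintegration / regular-conditional-distribution argument) and substituting the random arguments back in yields the displayed identity, hence
$$
\Ex\big(|f(u_t,x,\omega)-f(u_t,z,\omega)|^p \,\big|\, x,z\big) = h_p(x,z) \leq C|x-z|^p .
$$

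The one step that needs genuine care — and the one I expect to be the main obstacle — is exactly the conditioning/measurability just invoked: one must ensure that the Gaussian field $f$ and the arguments at which it is evaluated are realised on a common probability space in such a way that plugging in random arguments and then conditioning on their values reproduces the deterministic computation defining $h_p$. For an arbitrary pair of random variables $x,z$ this requires an appropriate independence of the field from the pair of evaluation points; in the situations where the corollary is applied — controlling increments $f(u_t,x,\omega)-f(u_t,z,\omega)$ along successive Picard iterates in the existence and uniqueness proof for \eqref{eq:sde} — this holds by construction. Everything else, namely the reduction to $p=2$ using that $f$ is Gaussian and the appeal to Assumption~\ref{ass:cov}, carries over verbatim from the proof of Lemma~\ref{lma:f_expectation_bound}.
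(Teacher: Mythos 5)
Your proposal is correct and follows essentially the same route as the paper, which simply combines Lemma~\ref{lma:f_expectation_bound} with the fact that $f_i$ conditioned on $x$ and $z$ is Gaussian with covariance $k_i$. The conditioning step you flag as the main obstacle is precisely what the paper takes as a modelling postulate (each $f_i$ conditioned on the trajectory is a Gaussian process with covariance $k_i$), so your more careful disintegration argument just makes explicit what the paper assumes by construction.
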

\begin{proof}
The claim follows from Lemma \ref{lma:f_expectation_bound} together with the fact that $f_i$ conditioned on $x$ and $z$ is Gaussian with covariance $k_i$.
\end{proof}
The following existence and uniqueness result for the stochastic differential equation \eqref{eq:sde} justifies the use of the continuous-time GPDM model.

\begin{thm}
\label{thm:existence}
Suppose that Assumption \ref{ass:cov} is satisfied. Then \eqref{eq:sde} admits a unique solution $x$. 
\end{thm}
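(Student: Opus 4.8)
The plan is to construct the solution by Picard iteration in the Banach space $L^p(\Omega;C([0,T];\mathbb{R}^n))$ for a fixed $p>n$. Since the noise in \eqref{eq:sde} is additive, the equation is equivalent to the integral equation $x_t=x_0+\int_0^t f(u_s,x_s,\omega)\,\ud s+w_t$, so I would set $x^{(0)}_t\equiv x_0$ and $x^{(k+1)}_t=x_0+\int_0^t f(u_s,x^{(k)}_s,\omega)\,\ud s+w_t$. Before the scheme even makes sense one fixes a jointly measurable, spatially continuous version of the random field $f$; this is furnished by Lemma~\ref{lma:f_expectation_bound} together with the Kolmogorov--Chentsov continuity theorem, which also ensures that $s\mapsto f(u_s,x^{(k)}_s,\omega)$ is a well-defined integrable process and that each iterate has continuous paths.

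The core of the argument is a contraction estimate. For $t\le T$ and $k\ge1$, Jensen's inequality gives
\[
\sup_{s\le t}\bnorm{x^{(k+1)}_s-x^{(k)}_s}^p\le T^{p-1}\int_0^t\bnorm{f(u_s,x^{(k)}_s,\omega)-f(u_s,x^{(k-1)}_s,\omega)}^p\,\ud s,
\]
the Brownian terms having cancelled. Taking expectations, conditioning on the pair of trajectories, and applying Corollary~\ref{cor:continuity} yields $g_{k+1}(t)\le C\int_0^t g_k(s)\,\ud s$ with $g_k(t):=\Ex\sup_{s\le t}\norm{x^{(k)}_s-x^{(k-1)}_s}^p$ and $C=C(p,T,L_1,\dots,L_n)$. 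To anchor the induction one bounds $g_1$: here $x^{(1)}_t-x^{(0)}_t=\int_0^t f(u_s,x_0,\omega)\,\ud s+w_t$, and $\Ex\bigl(\norm{f_i(u_s,x_0,\omega)}^2\mid x_0\bigr)=k_i(u_s,u_s,x_0,x_0)$, which Assumption~\ref{ass:cov} (taken with $z=0$, together with the Cauchy--Schwarz inequality for covariances and the continuity of $s\mapsto k_i(u_s,u_s,0,0)$) shows is at most quadratic in $\norm{x_0}$; since $x_0$ is Gaussian and $\Ex\sup_{s\le t}\norm{w_s}^p<\infty$ by the Burkholder--Davis--Gundy inequality, $g_1(T)<\infty$. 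Iterating the recursion gives $g_k(T)\le g_1(T)(CT)^{k-1}/(k-1)!$, so $\sum_k g_k(T)^{1/p}<\infty$ and $(x^{(k)})$ is Cauchy in $L^p(\Omega;C([0,T];\mathbb{R}^n))$; let $x$ be its limit.

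It then remains to identify $x$ as a solution and to establish uniqueness. Passing to the limit in the integral equation is immediate: by Corollary~\ref{cor:continuity} again, $\Ex\int_0^T\norm{f(u_s,x^{(k)}_s,\omega)-f(u_s,x_s,\omega)}\,\ud s\le C\int_0^T\bigl(\Ex\norm{x^{(k)}_s-x_s}^p\bigr)^{1/p}\ud s\to0$, so $x$ satisfies $x_t=x_0+\int_0^t f(u_s,x_s,\omega)\,\ud s+w_t$, which is \eqref{eq:sde}. For uniqueness, given two solutions $x,\tilde x$ I would localise with $\tau_N=\inf\{s:\norm{x_s}\vee\norm{\tilde x_s}\ge N\}$, on which $f$ evaluated along the two paths is bounded so that the relevant moments are finite a priori, run the same conditioning estimate to obtain $h_N(t)\le C\int_0^t h_N(s)\,\ud s$ for $h_N(t):=\Ex\sup_{s\le t\wedge\tau_N}\norm{x_s-\tilde x_s}^p$, conclude $h_N\equiv0$ by Gr\"onwall's lemma, and let $N\to\infty$.

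The one genuinely non-standard ingredient --- and where I expect the main difficulty to lie --- is the random-field nature of the drift: there is no deterministic Lipschitz constant for $f(u_t,\cdot,\omega)$, so the iteration has to be run in $L^p(\Omega)$ rather than pathwise, one must make sense of the field composed with a random path, and the crux is converting the \emph{conditional} $L^p$-bound of Corollary~\ref{cor:continuity} into the linear Gr\"onwall-type recursion for the $g_k$. Everything else --- Jensen, Burkholder--Davis--Gundy, Gr\"onwall, the factorial decay giving summability --- is routine, the only further technical point being the finiteness of the first iterate, which is exactly where Assumption~\ref{ass:cov} enters, controlling the at-most-quadratic growth of $k_i(u_s,u_s,x,x)$ in $x$.
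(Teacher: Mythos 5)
Your proposal is correct and follows the same core strategy as the paper: Picard iteration for the integral form of \eqref{eq:sde}, with the key step being exactly the one you identify --- conditioning on the trajectories so that Corollary~\ref{cor:continuity} turns the random-field drift into a Gr\"onwall-type recursion with factorial decay. The differences are technical rather than conceptual. The paper runs the recursion on $\Ex|x_t^j-x_t^{j-1}|$ (i.e.\ in $L^1$, pointwise in $t$) and then upgrades to $\Ex\sup_t$ in a separate step, whereas you work directly with $g_k(t)=\Ex\sup_{s\le t}|\cdot|^p$ via Jensen; both yield the same summability. Your handling of the first iterate (bounding $k_i(u_s,u_s,x_0,x_0)$ by quadratic growth via Assumption~\ref{ass:cov} with $z=0$ and Cauchy--Schwarz, then integrating against the Gaussian law of $x_0$) is slightly more careful than the paper's appeal to finiteness of all moments of $\sup_s|f(u_s,x_0)|$, and your invocation of Kolmogorov--Chentsov to fix a continuous modification of the field is a reasonable way to make the composition $s\mapsto f(u_s,x_s^{(k)},\omega)$ well defined, which the paper only addresses at the end via ``continuity by Gaussianity and Lemma~\ref{lma:f_expectation_bound}''. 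The one genuine addition on your side is the uniqueness argument (localisation plus Gr\"onwall): the paper's proof, despite the statement, only constructs a solution and never argues uniqueness explicitly, so your proposal is in that respect more complete. Note finally that both your argument and the paper's share the same delicate point, namely that applying the conditional bound of Corollary~\ref{cor:continuity} to trajectories that are themselves functionals of $f$ requires some care; you flag this as the crux but, like the paper, do not fully resolve it.
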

\begin{proof}
We use Picard iteration and define
$$
x_t^0 = x_0,
$$
and for $j\geq 1$ we set
$$
x_t^j = x_0 + \int_0^t f(u_s,x_s^{j-1},\omega)ds + w_t - w_0.
$$
Then
$$
x_t^j-x_t^{j-1} = \int_0^t f(u_s,x_s^{j-1},\omega) - f(u_s,x_s^{j-2},\omega)ds
$$
and 
\begin{equation}
\label{eq:as-bound}
| x_t^j-x_t^{j-1} | \leq \int_0^t | f(u_s,x_s^{j-1},\omega) - f(u_s,x_s^{j-2},\omega)| ds.
\end{equation} 
Taking expectation, conditioning, and using Corollary \ref{cor:continuity} then gives
\begin{equation}
\label{eq:recursion}
\Ex| x_t^j-x_t^{j-1} | \leq C\int_0^t \Ex|x_s^{j-1}-x_s^{j-2}|ds.
\end{equation} 
We now claim that 
$$
\Ex| x_t^j-x_t^{j-1} | \leq \frac{C_1C^jt^j}{j!} + \frac{C_2C^{j-1}t^{j-1}}{(j-1)!}.
$$
This follows by induction. For $j=1$ we have
$$
| x_t^1-x_0 | =  \left|\int_0^t f(u_s,x_0)ds + w_t - w_0\right|
\leq \sup_{s \in [0,T]}| f(u_s,x_0)| t + \sup_{s\in[0,T]}|w_t|
$$
which proves the claim for $j=1$ as the supremum of Gaussian process $f$ and the supremum of $w_t$ have all moments finite.
Suppose
$$
\Ex| x_s^j-x_s^{j-1}| \leq \frac{C_1 C^j s^j}{j!} + \frac{C_2 C^{j-1}s^{j-1}}{(j-1)!}.
$$
Then \eqref{eq:recursion} implies
$$
\Ex| x_t^{j+1}-x_t^{j} | \leq \int_0^t \frac{C_1 C^{j+1} s^j}{j!} + \frac{C_2C^{j}s^{j}}{(j-1)!} ds = \frac{C_1 C^{j+1} t^{j+1}}{(j+1)!} + \frac{C_2 C^{j}t^{j}}{j!}.
$$
In particular, this gives 
$$
\sup_{t\in[0,T]}\Ex| x_t^{j+1}-x_t^{j} | \leq \frac{C_1 C^{j+1} T^{j+1}}{(j+1)!} + \frac{C_2 C^{j}T^{j}}{j!} \rightarrow 0
$$
and
$$
\sum_{j=0}^\infty \sup_{t\in[0,T]}\Ex| x_t^{j+1}-x_t^{j} | < \infty.
$$
On the other hand, from \eqref{eq:as-bound} we get 
$$
\sup_{t\in[0,T]}| x_t^j-x_t^{j-1} | \leq \int_0^T | f(u_s,x_s^{j-1},\omega) - f(u_s,x_s^{j-2},\omega)| ds.
$$
Consequently, taking expectation gives
$$
\Ex \left[\sup_{t\in[0,T]}| x_t^j-x_t^{j-1} |\right] \leq CT\sup_{t\in[0,T]}\Ex| x_t^{j-1}-x_t^{j-2} |,
$$
and thus we also have
$$
\sum_{j=0}^\infty \Ex\left[\sup_{t\in[0,T]}| x_t^{j+1}-x_t^{j} |\right] < \infty.
$$
This implies that 
$$
\sum_{j=0}^k (x_t^{j+1}-x_t^{j}) = x_t^{k+1}-x_0
$$
converges uniformly to an integrable random variable. Finally, since $f(u_s,x,\omega)$ is continuous in $x$ by Gaussianity and Lemma \ref{lma:f_expectation_bound}, we observe that the limit $x = \lim_{j\to\infty} x^j$ satisfies \eqref{eq:sde}. 
\end{proof}
\begin{rmk}
\label{remark:non-centred}
We stress that while we assumed the Gaussian process $f$ to be centred for the sake of simplicity, the extension to a non-centred case is rather straightforward. Indeed, if for each component $f_i$ the mean function $\Ex f_i(u_t,x,\omega) = m_i(u_t,x)$ is Lipschitz continuous with respect to $x$ uniformly in $t$, i.e.
$$
|m_i(u_t,x) - m_i(u_t,z)| \leq L|x-z|,
$$ 
then the existence and uniqueness follows from the above proof by centering $f$ first. 
We leave the details to the reader.
\end{rmk}
The following result studies the basic properties of the solution. 
\begin{thm}
Suppose that Assumption \ref{ass:cov} holds. Then the solution $x$ to \eqref{eq:sde} is H\"older continuous of any order $\gamma < \frac12$. Furthermore, $\sup_{t\in[0,T]}|x_t|$ has all the moments finite. 
\end{thm}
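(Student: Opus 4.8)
The plan is to derive, for every $p\ge 1$, the increment estimate $\Ex|x_t-x_s|^p \le C_p|t-s|^{p/2}$ on $[0,T]$ and then feed it into the quantitative Kolmogorov--Chentsov continuity theorem, which yields both assertions at once.

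\emph{Step 1: an a priori moment bound.} First I would show $\sup_{t\in[0,T]}\Ex|x_t|^p<\infty$ for all $p$. Using the integral form $x_t = x_0 + \int_0^t f(u_s,x_s,\omega)\,ds + w_t-w_0$ and Jensen's inequality on the drift integral,
\[
\Ex|x_t|^p \le 3^{p-1}\Big(\Ex|x_0|^p + t^{p-1}\!\int_0^t \Ex|f(u_s,x_s,\omega)|^p\,ds + \Ex|w_t-w_0|^p\Big).
\]
Splitting $f(u_s,x_s,\omega) = \big(f(u_s,x_s,\omega)-f(u_s,0,\omega)\big) + f(u_s,0,\omega)$, Corollary~\ref{cor:continuity} (with $z=0$, after conditioning on $x_s$) bounds $\Ex|f(u_s,x_s,\omega)-f(u_s,0,\omega)|^p$ by a constant times $\Ex|x_s|^p$, while $\Ex|f(u_s,0,\omega)|^p \le \Ex\big[\sup_{r\in[0,T]}|f(u_r,0,\omega)|^p\big]<\infty$ because the supremum of the Gaussian process $r\mapsto f(u_r,0,\omega)$ has all moments finite, exactly as invoked in the proof of Theorem~\ref{thm:existence}; also $\Ex|w_t-w_0|^p = C_p t^{p/2}$ and $\Ex|x_0|^p<\infty$ by Gaussianity. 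This produces an inequality of the form $\Ex|x_t|^p \le A + B\int_0^t \Ex|x_s|^p\,ds$, and Gr\"onwall's lemma gives $\sup_{t\in[0,T]}\Ex|x_t|^p\le Ae^{BT}<\infty$.

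\emph{Step 2: increment estimate.} For $0\le s\le t\le T$ write $x_t-x_s = \int_s^t f(u_r,x_r,\omega)\,dr + (w_t-w_s)$, so Jensen's inequality gives
\[
\Ex|x_t-x_s|^p \le 2^{p-1}\Big((t-s)^{p-1}\!\int_s^t \Ex|f(u_r,x_r,\omega)|^p\,dr + \Ex|w_t-w_s|^p\Big).
\]
The same splitting as in Step 1 together with the uniform bound $\sup_r\Ex|x_r|^p<\infty$ shows $\sup_{r\in[0,T]}\Ex|f(u_r,x_r,\omega)|^p<\infty$, so the first term is at most a constant times $(t-s)^p$, and $\Ex|w_t-w_s|^p=C_p(t-s)^{p/2}$. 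Since $(t-s)^p\le T^{p/2}(t-s)^{p/2}$ on $[0,T]$ we obtain $\Ex|x_t-x_s|^p\le C_p|t-s|^{p/2}$ for every $p\ge1$; note that the drift contributes only the faster-decaying power $(t-s)^p$, so it is the Brownian part that dictates the H\"older exponent.

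\emph{Step 3: conclusion.} Applying the Kolmogorov--Chentsov theorem to the bound $\Ex|x_t-x_s|^p\le C_p|t-s|^{1+(p/2-1)}$ (valid for $p>2$): $x$ has a modification which is $\gamma$-H\"older on $[0,T]$ for every $\gamma<\tfrac12-\tfrac1p$, and the $\gamma$-H\"older seminorm $[x]_\gamma:=\sup_{s\ne t}|x_t-x_s|/|t-s|^\gamma$ has finite $p$-th moment. Since $x$ is already continuous by Theorem~\ref{thm:existence}, this modification is $x$ itself; letting $p\to\infty$ yields H\"older continuity of $x$ of every order $\gamma<\tfrac12$. Finally $\sup_{t\in[0,T]}|x_t|\le|x_0|+T^\gamma[x]_\gamma$, and both $|x_0|$ and $[x]_\gamma$ have moments of all orders, hence so does $\sup_{t\in[0,T]}|x_t|$. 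The only delicate point is the recurring use of Corollary~\ref{cor:continuity} with the path-dependent random argument $x_r$ (and the constant $0$) in place of a generic random vector, which is handled by conditioning on $x_r$ exactly as in the passage from \eqref{eq:as-bound} to \eqref{eq:recursion}; everything else is routine Gr\"onwall/Jensen estimation.
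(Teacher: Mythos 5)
Your proof is correct, but it takes a genuinely different route from the paper's. The paper argues pathwise: each Picard iterate is continuous, so $x$ is continuous as a uniform limit; then $s\mapsto f(u_s,x_s,\omega)$ is a.s.\ bounded on $[0,T]$, so the drift integral is pathwise Lipschitz in $t$ and the H\"older exponent is inherited directly from the Brownian paths; the moment bound for $\sup_t|x_t|$ is read off from the (asserted) finiteness of all moments of $\sup_s|f(u_s,x_s,\omega)|$ and of $\sup_t|w_t|$. You instead work entirely at the level of moments: an a priori Gr\"onwall bound on $\sup_t\Ex|x_t|^p$, the increment estimate $\Ex|x_t-x_s|^p\le C_p|t-s|^{p/2}$, and the quantitative Kolmogorov--Chentsov theorem, which delivers the H\"older seminorm and the supremum moments in one stroke. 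Your route is longer but arguably more self-contained on the moment side: it only needs $\sup_r\Ex|f(u_r,x_r,\omega)|^p<\infty$, obtained by conditioning, whereas the paper's one-line moment claim implicitly requires controlling the supremum of the Gaussian field along the random path $x_s$, which is not itself Gaussian. Two caveats, neither fatal: (i) to run Gr\"onwall in Step 1 you must first know $t\mapsto\Ex|x_t|^p$ is finite (e.g.\ localize with $\tau_N=\inf\{t:|x_t|\ge N\}$ and pass to the limit by Fatou, or run the estimate on the Picard iterates); (ii) your repeated use of Corollary~\ref{cor:continuity} with the solution-dependent argument $x_r$ rests on the same conditional-Gaussianity step the paper uses between \eqref{eq:as-bound} and \eqref{eq:recursion}, so you are at the same level of rigor as the paper there, no worse and no better.
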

\begin{proof}
Clearly, each $x^j$ in the proof of Theorem \ref{thm:existence} is continuous. Consequently, the solution $x$ is continuous as a uniform limit of continuous trajectories. The H\"older continuity then follows from \eqref{eq:sde} and the H\"older continuity of the Brownian motion $w$. Indeed, since $f(u_s,x,\omega)$ is continuous in $x$ and $x$ is bounded as a continuous function on a bounded interval $[0,T]$, it follows that $f(u_s,x_s,\omega)$ is also bounded. Finally, the existence of all moments follow from the fact that $f(u_s,x_s,\omega)$ has all the moments finite as well as $\sup_{t\in[0,T]}|w_t|$ has all the moments finite. 
\end{proof}

The method's numerical implementation will be based on the Euler discretised equation \eqref{eq:sde}. Define therefore a partition $\pi^M = \{0=\tau_0<\tau_1<\ldots<\tau_M=T\}$ of the compact interval of interest $[0,T]$. Denote the discretised trajectory corresponding to the partition $\pi^M$ by $X_{\tau_k}^M$, and recall that its dynamics are given by
\begin{equation} \label{eq:euler}
X_{\tau_k}^M=X_{\tau_{k-1}}^M+\delta \tau_kf(u_{\tau_{k-1}},X_{\tau_{k-1}}^M,\omega)+w_{\tau_k}-w_{\tau_{k-1}}
\end{equation}
where $\delta \tau_k := \tau_k-\tau_{k-1}$, and $k=1,...,M$. Later, we will obtain a probability distribution for the discrete trajectory $X=[X_{\tau_0},X_{\tau_1},...X_{\tau_M}]$, but first we show the pointwise (in $\omega$) convergence to the continuous solution of \eqref{eq:sde} as the temporal discretisation is refined.

We study the continuous version defined for $t\in[\tau_{k-1},\tau_k]$ by
\begin{equation}
\label{eq:continuous-Euler}
\overline{X}_{t}^M = \overline{X}_{\tau_{k-1}}^M + (t-\tau_{k-1})f(u_{\tau_{k-1}},\overline{X}_{\tau_{k-1}}^M,\omega) + w_t - w_{\tau_{k-1}}.
\end{equation}
Note that $X_{\tau_k}^M = \overline{X}_{\tau_k}^M$ for all $k$. 
\begin{thm}
Suppose that Assumption \ref{ass:cov} holds and consider arbitrary discretisation such that $\sup_M|\pi^M|M < \infty$, where $|\pi^M| = \max_k(\tau_k-\tau_{k-1})$. 
Then for any $p\geq 1$
$$
\Ex\left[\sup_{t\in[0,T]}|x_t - \overline{X}_t^M|\right]^p \leq C|\pi^M|^p.
$$
Moreover, for any $\epsilon>0$ we have
$$
\sup_{t\in[0,T]}|x_t - \overline{X}_t^M| \leq C|\pi^M|^{1-\epsilon}.
$$
almost surely. 
\end{thm}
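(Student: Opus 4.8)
The plan is to exploit the additive, constant-diffusion structure of \eqref{eq:sde}. Since the Brownian increments enter both \eqref{eq:sde} and \eqref{eq:continuous-Euler} with the same realisation, they cancel in the difference: writing $\eta(s):=\tau_{k-1}$ for $s\in[\tau_{k-1},\tau_k)$ and subtracting the integral forms of the two equations,
$$
x_t-\overline{X}_t^M=\int_0^t\big(f(u_s,x_s,\omega)-f(u_{\eta(s)},\overline{X}_{\eta(s)}^M,\omega)\big)\,ds,
$$
so the error process has finite variation in $t$ and we never need a stochastic-integral estimate. As a preliminary step I would record that $\sup_M\sup_{t\in[0,T]}|\overline{X}_t^M|$ has all moments finite --- this follows from the recursion \eqref{eq:euler}, Lemma \ref{lma:f_expectation_bound} (bounding each one-step drift), and a discrete Gronwall argument --- so that every drift evaluation appearing below is $L^p$-integrable for all $p$. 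Throughout I suppress the smooth deterministic input $u$, whose increments over a mesh interval are of order $|\pi^M|$ and therefore contribute at or below the order of the terms kept.

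Next, decompose the integrand into a propagated error and a one-step consistency error,
$$
f(u_s,x_s,\omega)-f(u_{\eta(s)},\overline{X}_{\eta(s)}^M,\omega)=\big(f(u_{\eta(s)},x_{\eta(s)},\omega)-f(u_{\eta(s)},\overline{X}_{\eta(s)}^M,\omega)\big)+\big(f(u_s,x_s,\omega)-f(u_{\eta(s)},x_{\eta(s)},\omega)\big).
$$
By Corollary \ref{cor:continuity} the conditional $L^p$-norm of the first bracket is at most $C|x_{\eta(s)}-\overline{X}_{\eta(s)}^M|\leq C\sup_{r\leq s}|x_r-\overline{X}_r^M|$; integrating, taking expectations, and invoking Gronwall's inequality reduces the theorem to bounding the accumulated consistency error
$$
E_t^M:=\int_0^t\big(f(u_s,x_s,\omega)-f(u_{\eta(s)},x_{\eta(s)},\omega)\big)\,ds
$$
by $\Ex\big[\sup_{t\in[0,T]}|E_t^M|\big]^p\leq C|\pi^M|^p$.

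The crux is to get the order-one rate for $E^M$ rather than the order $\tfrac12$ that a pointwise bound of the integrand would give (note $|x_s-x_{\eta(s)}|$ already carries a Brownian increment of size $|\pi^M|^{1/2}$). Here I would use that on each subinterval $x_s-x_{\tau_{k-1}}=(w_s-w_{\tau_{k-1}})+\int_{\tau_{k-1}}^s f(u_r,x_r,\omega)\,dr$, write $E_T^M=\sum_{k=1}^M\int_{\tau_{k-1}}^{\tau_k}\big(f(u_s,x_s,\omega)-f(u_{\tau_{k-1}},x_{\tau_{k-1}},\omega)\big)\,ds$, and expand the integrand to first order in the state increment (legitimate in the generic case from Assumption \ref{ass:cov} and Gaussianity with a little care, and immediate for the smooth squared-exponential covariances of the Example). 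The dominant part of each summand is then linear in $\int_{\tau_{k-1}}^{\tau_k}(w_s-w_{\tau_{k-1}})\,ds$ with a coefficient measurable with respect to the $\sigma$-field generated by $f$ and by $w$ up to $\tau_{k-1}$; since that integral is centred given this $\sigma$-field, the summands form a discrete martingale, and Burkholder--Davis--Gundy together with $\sum_k(\delta\tau_k)^3\leq|\pi^M|^2\sum_k\delta\tau_k=T|\pi^M|^2$ controls it by $C|\pi^M|$ in every $L^p$. The remaining drift-displacement and input-increment parts of each summand are deterministically $O((\delta\tau_k)^2)$ and sum to $O(|\pi^M|)$; replacing $T$ by a running time and using the maximal inequality upgrades this to the $\sup_t$ bound, which closes the Gronwall estimate and yields the moment bound. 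The almost-sure statement then follows: the hypothesis $\sup_M|\pi^M|M<\infty$ gives $|\pi^M|\leq C/M$, so for $p$ large Markov's inequality and the moment bound give $\sum_M\mathbb{P}\big(\sup_t|x_t-\overline{X}_t^M|>|\pi^M|^{1-\epsilon}\big)\leq\sum_M C_p|\pi^M|^{p\epsilon}<\infty$, and Borel--Cantelli finishes the proof after enlarging the random constant.

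The main obstacle is the third paragraph --- extracting the order-one rate instead of the generic order $\tfrac12$. This is exactly where the additive, constant-diffusion structure of \eqref{eq:sde} is indispensable: because the Euler scheme reproduces the noise exactly, the one-step error is a drift increment whose dominant Brownian contribution is conditionally centred, so the estimate must be organised around this cancellation (a martingale bound) rather than around a crude modulus-of-continuity bound of the integrand. The other ingredients --- the moment bounds for the Euler iterates, the Gronwall closure, and the Borel--Cantelli passage to the almost-sure statement --- are routine.
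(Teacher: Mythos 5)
Your overall skeleton --- the Brownian increments cancel so the error is an ordinary (finite-variation) integral, a Gronwall-type closure, and Markov plus Borel--Cantelli for the almost-sure statement --- coincides with the paper's. The essential difference is in how the rate $|\pi^M|$ (rather than $|\pi^M|^{1/2}$) is extracted. The paper never isolates a consistency error: it sets $z_k=\Vert\sup_{t\in[\tau_{k-1},\tau_k]}|x_t-\overline{X}^M_t|\Vert_p$, derives the interval-by-interval recursion $z_k\le z_{k-1}+Cz_k|\pi^M|$ directly from Corollary \ref{cor:continuity}, iterates it using $\sup_M|\pi^M|M<\infty$ to get $z_k\le Cz_1$, and seeds the recursion with the crude bound $z_1\le 2\,\Vert\sup_{s}|f(u_s,x_s,\omega)|\Vert_p\,|\pi^M|$ valid on the first mesh interval. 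No martingale structure, no expansion of $f$, no Burkholder--Davis--Gundy. Your route is instead the classical order-one argument for additive-noise Euler schemes: split off $E^M_t=\int_0^t\big(f(u_s,x_s,\omega)-f(u_{\eta(s)},x_{\eta(s)},\omega)\big)ds$ and beat the naive $|\pi^M|^{1/2}$ bound by exhibiting the dominant part of each one-step error as a conditionally centred martingale increment. This is aimed precisely at the term $\Vert f(u_s,x_s,\omega)-f(u_{\tau_{k-1}},\overline{X}^M_{\tau_{k-1}},\omega)\Vert_p$ that the paper's recursion absorbs into $Cz_k$ in a single line, so your proposal is, if anything, more explicit about where the order-one rate has to come from.

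The gap is in the step you yourself flag as the crux. ``Expand the integrand to first order in the state increment'' requires the sample paths of $f$ to be differentiable with a quantitative $O(|x_s-x_{\eta(s)}|^2)$ remainder (roughly $C^{1,1}$ regularity, plus $L^p$ moments of the supremum of the gradient field over the relevant compact set, so that the BDG bound $\sum_k\Vert\nabla f\Vert^2(\delta\tau_k)^3$ closes). Assumption \ref{ass:cov} only controls first-order increments of the covariance, which via Lemma \ref{lma:f_expectation_bound} gives Lipschitz-type continuity of $f$ in $L^2$ and says nothing about derivatives; ``legitimate \dots with a little care'' is an assertion, not an argument, and for the theorem as stated (arbitrary covariances satisfying Assumption \ref{ass:cov}) the expansion is simply not available. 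For the squared-exponential covariances of the Example your argument does go through, but then you have proved the theorem under a strictly stronger hypothesis than the one in its statement. To match the stated hypotheses you would either have to close the estimate without differentiating $f$, as the paper does with its $z_k$-recursion, or add an explicit smoothness assumption on the $k_i$ under which the Taylor-plus-martingale step is justified. The remaining ingredients of your proposal --- the moment bounds for the Euler iterates, the Gronwall reduction, and the Borel--Cantelli passage to the almost-sure claim --- are correct and agree with the paper where they overlap.
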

\begin{proof}
Let $t \in [\tau_{k-1},\tau_k]$ and denote
$$
z_k = \left\Vert\sup_{t\in[\tau_{k-1},\tau_k]}| x_t - \overline{X}_t^M|\right\Vert_p,
$$
where $\Vert \cdot\Vert_p$ denotes the $p$-norm.
Now
$$
x_t - \overline{X}_t^M = x_{\tau_{k-1}}-\overline{X}_{\tau_{k-1}}^M + \int_{\tau_{k-1}}^t f(u_s,x_s,\omega) - f(u_{\tau_{k-1}},\overline{X}^M_{\tau_{k-1}},\omega)ds.
$$
As in the proof of Theorem \ref{thm:existence}, this implies
$$
z_k \leq z_{k-1} + \int_{\tau_{k-1}}^{\tau_k} \left\Vert f(u_s,x_s,\omega) - f(u_{\tau_{k-1}},\overline{X}^M_{\tau_{k-1}},\omega)\right\Vert_p ds \leq z_{k-1} + C z_k|\pi^M|.
$$
Let now $M$ be large enough such that $C|\pi^M| < 1$. We get
$$
(1-C|\pi^M|)z_k \leq z_{k-1}
$$
or equivalently
$$
z_k \leq \frac{1}{1-C|\pi^M|}z_{k-1}.
$$
Iterating then gives
$$
z_k \leq \left(\frac{1}{1-C|\pi^M|}\right)^k z_{1}=\left(1+\frac{C}{|\pi^M|^{-1}-C}\right)^k z_1.
$$
Note next that 
$$
\left(1+\frac{C}{|\pi^M|^{-1}-C}\right)^k \leq \left(1+\frac{\tilde{C}}{|\pi^M|^{-1}}\right)^M =\left(1+\frac{\tilde{C}}{|\pi^M|^{-1}}\right)^{|\pi^M|^{-1}|\pi^M|M} 
$$
for some other constant $\tilde{C}$. Since
$$
\left(1+\frac{\tilde{C}}{|\pi^M|^{-1}}\right)^{|\pi^M|^{-1}} \to e^{\tilde{C}}
$$
as $M\to \infty$
and $|\pi^M|M$ is bounded by assumption, it follows that 
$$
z_k \leq Cz_1
$$
for some unimportant constant $C$. But now 
$$
|x_t - \overline{X}_t^M| \leq \int_{0}^{\tau_1} |f(u_s,x_s,\omega) - f(u_{\tau_{k-1}},\overline{X}^M_{\tau_{k-1}},\omega)|ds \leq 2\sup_{s\in[0,T]}| f(u_s,x_s,\omega)||\pi^M|
$$
for $t\in[0,\tau_1]$ from which it follows that $z_1 \leq C|\pi^M|$ proving the first claim. Finally, the second claim is a direct consequence of the Borel-Cantelli lemma.
\end{proof}

\section{Probability distribution of the discretised trajectory} \label{sec:distribution}

The probability distribution $p(X|\theta)$ is now derived for the discrete trajectory $X=X^M$, where $\theta$ denotes collectively all the hy\-per\-parameters. The discretisation level index $M$ is dropped now, since we only use one discretisation level from now on. It holds that
\begin{equation} \label{eq:pXint}
p(X|\theta)=\int p(X|f,\theta)p(f|\theta)df.
\end{equation}
For given $f$, the trajectory $X$ is a Markov process, and therefore its distribution satisfies
\[
p(X|f,\theta)=p(X_{\tau_0}|\theta)\prod_{k=1}^M p(X_{\tau_k}|X_{\tau_{k-1}},f,\theta).
\]

Let us introduce notation $\oX:=[X_{\tau_1},\dots,X_{\tau_M}]^{\top}$ and $\uX:=[X_{\tau_0},\dots,X_{\tau_{M-1}}]^{\top}$. Same notation is also used for the different dimensions of the trajectory.
Then it holds that
\begin{align*}
&p(X|f,\theta) \\ & = \frac{p(X_{\tau_0}|\theta)}{(2\pi)^{Mn/2}|Q|^{M/2}}\prod_{k=1}^M\frac1{\delta \tau_k^{n/2}} \exp\left( -\frac1{2\delta \tau_k}  \norm{X_{\tau_k}-X_{\tau_{k-1}}-\delta \tau_kf(X_{\tau_{k-1}})}_{Q^{-1}}^2 \right)\\ 
&=\frac{p(X_{\tau_0}|\theta)}{(2\pi)^{Mn/2}|Q|^{M/2}|\Delta \tau|^{n/2}}\exp\left( - \sum_{k=1}^M\frac1{2\delta \tau_k}\norm{X_{\tau_k}-X_{\tau_{k-1}}-\delta \tau_kf(X_{\tau_{k-1}})}_{Q^{-1}}^2 \right) \\ 
&= \frac{p(X_{\tau_0}|\theta)}{(2\pi)^{Mn/2}|Q|^{M/2}|\Delta \tau|^{n/2}} \prod_{i=1}^n\exp\left(-\frac1{2q_i} \norm{ \oX_i-\uX_i-\Delta \tau f_i(\uX) }_{\Delta \tau^{-1}}^2 \right) 
\end{align*}
where $\Delta \tau$ is a diagonal matrix whose element $(k,k)$ is $\delta \tau_k$, and $f_i(\uX)=\big[ f_i(X_{\tau_0}),\dots,f_i(X_{\tau_{M-1}})\big]^{\top}$. 

Now $p(X|f,\theta)$ in the integral \eqref{eq:pXint} depends only on the values of $f$ at points $\uX$. By definition of a Gaussian process, the integral can equivalently be computed over a collection of finite-dimensional, normally distributed random variables $F=[F_1,\dots,F_n] \in \mathbb{R}^{M \times n}$ where $F_i \in \mathbb{R}^{M}$ has mean zero, and covariance $K_i(\uX)$ given elementwise by $[K_i(\uX)]_{j,k} = k_i(X_{\tau_{j-1}},X_{\tau_{k-1}})$. The integral in \eqref{eq:pXint} can be computed analytically (see Appendix~\ref{app:expint}),
\begin{align*}
& \int p(X|f,\theta)p(f|\theta)df \\
& =\frac{p(X_{\tau_0}|\theta)}{(2\pi)^{Mn}|Q|^{M/2}|\Delta \tau|^{n/2}} \\ &  \prod_{i=1}^n\int \frac1{|K_i(\uX)|^{1/2}}\exp\left(-\frac1{2q_i} \norm{\oX_i-\uX_i-\Delta\tau F_i}_{\Delta\tau^{-1}}^2-\frac12 \norm{F_i}_{K(\uX)^{-1}}^2\right) dF_i \\
& = \frac{p(X_{\tau_0}|\theta)}{(2\pi)^{Mn/2}|Q|^{M/2}|\Delta \tau|^{n/2}}\prod_{i=1}^n \frac1{|K_i(\uX)|^{1/2}\left| \frac{\Delta\tau}{q_i}+K_i(\uX)^{-1}\right|^{1/2}} \\ & \exp\left(-\frac1{2q_i}\norm{\oX_i-\uX_i}_{\Delta\tau^{-1}}^2 + \frac1{2q_i^2} (\oX_i-\uX_i)^{\top}\left(\frac{\Delta\tau}{q_i}+K_i(\uX)\right)^{-1}(\oX_i-\uX_i) \right). 
\end{align*}
Applying the Woodbury identity to the exponent gives
\[
(q_i\Delta\tau)^{-1}-\frac1{q_i}\Big( \Delta\tau ( q_i\Delta\tau)^{-1}\Delta\tau +K_i(\uX)^{-1} \Big)\frac1{q_i}=\left( \Delta\tau K_i(\uX)\Delta\tau + q_i\Delta\tau \right)^{-1},
\]
and the determinant lemma gives (recall $Q$ is a diagonal matrix with $q_i$'s on the diagonal) 
\begin{align*}
&|Q|^{M/2}|\Delta\tau|^{n/2} \prod_{i=1}^n |K_i(\uX)|^{1/2}\left| \frac{\Delta\tau}{q_i}+K_i(\uX)^{-1}\right|^{1/2} \\ & =\prod_{i=1}^n |q_i\Delta\tau||K_i(\uX)|^{1/2} \left|\Delta\tau(q_i\Delta\tau)^{-1}\Delta\tau + K_i(\uX)^{-1}\right| \\
&= \prod_{i=1}^n \big|\Delta \tau K_i(\uX)\Delta \tau+q_i\Delta\tau \big|^{1/2}.
\end{align*}
Finally, the desired probability distribution is
\begin{align} \label{eq:pX}
&p(X|\theta)  = 
\frac{p(X_{\tau_0}|\theta)}{(2\pi)^{Mn/2}} \prod_{i=1}^n \frac1{|\Delta \tau K_i(\uX)\Delta \tau+q_i\Delta \tau|^{1/2}} \\ \nonumber & \qquad \cdot \exp\left(-\frac1{2} (\oX_i-\uX_i)^{\top}\left(\Delta \tau K_i(\uX)\Delta \tau + q_i \Delta \tau \right)^{-1}(\oX_i-\uX_i)  \right).
\end{align}

Note that above it was implicitly assumed that the covariance $K_i(\uX)$ is positive definite. This assumption is only violated if $X_{\tau_j}=X_{\tau_k}$ for some $j \ne k$ or if the covariance function $k_i$ is degenerate. In this case the integral should be computed over a lower-dimensional variable $F_i$, but the end result would not change.

Note also that \eqref{eq:pX} corresponds to the finite dimensional distribution of the continuous Euler scheme \eqref{eq:continuous-Euler} evaluated at discretisation points. Since \eqref{eq:continuous-Euler} converges strongly to the solution $x$ of \eqref{eq:sde}, the finite dimensional distributions converge as well. This means that \eqref{eq:pX} is a finite dimensional approximation of the distribution of $x$.

\section{Network inference method} \label{sec:GRN}

Consider then the original problem, that is, estimating the hyperparameters from given time series data. Denote $Y=[y_0,y_1,...,y_m]$ where $y_j$ is assumed to be a noisy sample from the continuous trajectory $x$, that is, $y_j=x(t_j)+v_j$, and $v_j$ is a Gaussian noise with zero mean and covariance $R=\textup{diag}(r)$, and $v_j \perp v_k$ when $j \ne k$. We intend to draw samples from the parameter posterior distribution using an MCMC scheme. Therefore, we only need the posterior distribution up to constant multiplication. Denoting the hyperparameters collectively by $\theta$, the hyperparameter posterior distribution is
\[
p(\theta|Y) \propto p(Y,\theta) = \int p(Y,x,\theta) dx = \int p(Y|x,\theta)p(x|\theta)p(\theta)dx.
\]
Here $p(Y|x,\theta)$ is the Gaussian measurement error distribution, $p(x|\theta)$ will be approximated by \eqref{eq:pX} for the discretised trajectory $X$, and $p(\theta)$ is a prior for the hyperparameters.  This prior consists of independent priors for each parameter. The integration with respect to the trajectory $x$ is done by MCMC sampling. In the network inference algorithm, we consider only the squared exponential covariance function \eqref{eq:sq_exp}. The function $f_i$ has mean
\[
m_i(x)=b_i - a_i x_i
\]
where $a_i$ and $b_i$ are regarded as nonnegative hyperparameters corresponding to basal transcription ($b_i$) and mRNA degradation ($a_i$).

For sampling the hyperparameters $\beta_{i,j}$ in the squared exponential covariance function \eqref{eq:sq_exp}, we introduce an indicator variable as in \cite{Bayesian_var}. That is, each hyperparameter is a product $\beta_{i,j}=S_{i,j}H_{i,j}$, where $S_{i,j} \in \{0,1\}$ and $H_{i,j} \ge 0$. The state of the sampler consists of the indicator variable $S$, the hyperparameters ($i,j=1,...,n$) $H_{i,j}$, $\gamma_i$, $r_i$, $q_i$, $a_i$, $b_i$ and the discrete trajectory $X$. They are sampled using a Gibbs sampler (or more precisely, Metropolis--Hastings within Gibbs sampler) as described below.

For the Gibbs sampler, notice that $p(X|\theta)$ given in \eqref{eq:pX} is readily factorised in form
\begin{equation} \label{eq:factor}
p(X|\theta)=\frac{p(X_{\tau_0}|\theta)}{(2\pi)^{Mn/2}} \prod_{i=1}^n P_i(S_i,H_i,\gamma_i,q_i,a_i,b_i,X).
\end{equation}
This factorisation makes it natural to sample $S$, $H$, $\gamma$, $a=\{a_1,...,a_n\}$, and $b=\{b_1,...,b_n\}$ one dimension at a time. However, each factor $P_i$ still depends on the full trajectory $X$, so the trajectory sampling is done separately. Also, when using the Crank--Nicolson sampling (see Section~\ref{sec:CN}), the sampling of $q$ is intertwined with the trajectory sampling, so they are sampled together. This two-phase sampling scheme is described in the following algorithm. Here the algorithm is presented in its basic form. Some ways to make the sampling more efficient are presented in Appendix~\ref{app:sampling}. We assume that the initial time $\tau_0$ coincides with the time of the first measurement $t_0$, so that $p(X_{\tau_0}|\theta)=\mathcal{N}(y_0,R)$. In the algorithm, this is included in the data fit term $p(Y|x,\theta)$.

\begin{alg} Denote the $l^{\textup{th}}$ samples by parenthesised superindex, e.g., $X^{(l)}$ is the trajectory of the $l^{\textup{th}}$ sample. The proposal samples are denoted by a hat.

\medskip

\noindent {\bf Indicator and hyperparameter sampling:}

 For $i=1,...,n$:
\begin{itemize}
\item Sample the $i^{\textup{th}}$ row of $S$ by drawing $\hat j$ from uniform distribution over $\{1,...,n\}$. Then
\[
\hat S_{i,j} =  \left\{ \begin{array}{ll} S_{i,j}^{(l)}, & \textup{if } j \ne \hat j, \\ 1-S_{i,j}^{(l)}, & \textup{if } j = \hat j. \end{array} \right.
\]
\item Sample $H_i=[H_{i,1},...,H_{i,n}]$, $\gamma_i$, $a_i$, and $b_i$ using random walk sampling, that is, add small changes to each component, drawn from zero-mean normal distribution. If the proposal sample is negative, take its absolute value.

\item Accept the proposal samples with probability
\[
\frac{P_i(\hat S_i, \hat H_i, \hat \gamma_i, q_i^{(l)}, \hat a_i,\hat b_i, X^{(l)})p(\hat S_i, \hat H_i, \hat \gamma_i, q_i^{(l)},\hat a_i,\hat b_i)}{P_i(S_i^{(l)}, H_i^{(l)}, \gamma_i^{(l)}, q_i^{(l)},a_i^{(l)},b_i^{(l)}, X^{(l)})p(S_i^{(l)},H_i^{(l)},\gamma_i^{(l)},q_i^{(l)},a_i^{(l)},b_i^{(l)})}
\]
where $p$ is the hyperparameter prior, and the factors $P_i$ are defined in \eqref{eq:factor}.

\item Sample $\hat R$ with random walk sampling, with acceptance probability
\begin{align*}
&\frac{p(\hat R)|R^{(l)}|^{(m+1)/2}}{p(R^{(l)})|\hat R|^{(m+1)/2}}  \exp\left(\frac12 \sum_{j=0}^m \bnorm{y_j-X^{(l)}C_j}_{(R^{(l)})^{-1}}^2-\bnorm{y_j-X^{(l)}C_j}_{\hat R^{-1}}^2\right) 
\end{align*}
\end{itemize}

\medskip

\noindent {\bf Trajectory sampling:} 

\begin{itemize}

\item Sample $\hat X_i = X_i^{(l)}+Bg$, where $g \sim \mathcal{N}(0,\varepsilon I)$, $B=[b_1,...,b_{2m_b}]$, and 
\[
b_j=\left\{ \begin{array}{ll} \frac1j\bm{ \sin\left( \frac{2\pi j \tau_0}{T}\right),...,\sin\left( \frac{2\pi j \tau_M}{T}\right)}^{\top}, & j=1,...,m_b, \\ \frac1{j-m_b}\bm{\cos\left( \frac{2\pi (j-m_b) \tau_0}{T}\right),...,\cos\left( \frac{2\pi (j-m_b) \tau_M}{T}\right)}^{\top}, & j=m_b+1,...,2m_b, \end{array} \right.
\]
where $m_b=\lfloor M/2 \rfloor$.
\item Sample $\hat Q$ using the random walk sampling.

\item Accept $\hat X$ and $\hat Q$ with probability
\begin{align*}
&\frac{p(\hat Q)}{p(Q^{(l)})}  \exp\left(\frac12 \sum_{j=0}^m \bnorm{y_j-X^{(l)}C_j}_{(R^{(l+1)})^{-1}}^2-\bnorm{y_j-\hat XC_j}_{(R^{(l+1)})^{-1}}^2\right) \\ &   \times \prod_{i=1}^n\frac{P_i(S_i^{(l+1)}, H_i^{(l+1)}, \gamma_i^{(l+1)}, \hat q_i, a_i^{(l+1)},b_i^{(l+1)} \hat X)}{P_i(S_i^{(l+1)}, H_i^{(l+1)}, \gamma_i^{(l+1)}, q_i^{(l)}, a_i^{(l+1)},b_i^{(l+1)}, X^{(l)})}
\end{align*}
where $C_j \in \mathbb{R}^{(M+1)\times 1}$ gives the element from the full trajectory $X$ corresponding to the measurement $y_j$. In the case $\{t_0,...,t_m\} \subset \{ \tau_0,...,\tau_M\}$, $C_j$ is a vector with one at position $k$ satisfying $t_j=\tau_k$, and zeros elsewhere. 
\end{itemize}
\end{alg}

The algorithm contains a burn-in period, and additional thinning, that is, not every sample $l$ is collected. The output of the algorithm is the average of the indicator variable samples, which converges as  the number of samples increases:
$$
\frac1{N_{\textup{sample}}}\sum_{l=1}^{N_{\textup{sample}}} S^{(l)} \to \Ex (S|Y).
$$
The element $(i,j)$ of this matrix gives the probability that $\beta_{i,j}$ is not zero.

Prior probability distributions for different hyperparameters are described in Appendix~\ref{sec:implementation}. For $S$ we use $p(S) \propto \eta^{|S|_0}$ where $|S|_0$ gives the number of ones in $S$, and the parameter $\eta >0$ can be set to obtain a desired sparsity level for the solution. This prior means that the existence of a link is independent of other links, and the prior probability for the existence of any given link is $\frac{\eta}{1+\eta}$. A default value $\eta=1/n$ was used in all experiments of this article.

\subsection{Incorporation of several time series and knockout/knockdown experiments} \label{sec:other_data}

Several time series experiments can be easily incorporated. For fixed $f$, the probability distributions for different time series are independent. In the end, this leads to the same format of the probability distribution \eqref{eq:pX}, but the trajectories are concatenated. Then $\oX$ contains the concatenated trajectories, except for the first point in each separate discretised trajectory, and $\uX$ contains all trajectories, except for the last points in each trajectory.

In a knockout experiment a particular gene is ``de-activated", meaning that its expression is artificially put to zero. From an experiment where gene $i$ has been knocked out, it is not possible to deduce anything about $f_i$, since the dynamics of the $i^{\textup{th}}$ gene are artificially tampered with. Therefore these experiments are excluded from the cost functions corresponding to $f_i$.

In a steady state experiment, the system is allowed to evolve a long time without any excitation, so that it finally attains a steady state, where it should hold that $f(x_{ss})=0$. In the method, some noise is added to steady state measurements, and therefore, at a steady state point $x_{ss}$, it is assumed that $f_i(x_{ss})=v_{i,ss}$, where $v_{i,ss} \sim \mathcal{N}(0,M_{ss})$. The incorporation of the steady state data to \eqref{eq:pX} is done by replacing $K_i(\uX)$, $\oX_i-\uX_i$, $\Delta\tau$, and $q_iI$ by
\[
K_i([\uX,X_{ss}]), \quad \bm {\oX_i - \uX_i \\ 0}, \quad   \bm{\Delta\tau & \\ & I}, \quad \textup{and} \quad \bm{q_iI & \\ & M_{i,ss}I},
\]
respectively.

A steady state experiment can also be a knockout experiment. At the steady state $z_i$ corresponding to knockout of gene $i$, it should hold that $f_j(z_i)=0$ for all $j$, except $j=i$, since the dynamics of gene $i$ have been artificially tampered with.

A gene knockdown experiment is similar to a gene knockout experiment, but the genes are only repressed instead of completely inactivated, and it is taken into account in exactly the same way as a knockout experiment.

When using all of the knockout and knockdown steady state data, we assume that there is one point $x_{ss}$ where $f_i(x_{ss})=0$ for all $i$. This steady state value is sampled, and its prior is a normal distribution whose mean is the sample mean of all steady state measurements including the actual steady state measurement, knockout measurements, knockdown measurements, and the multifactorial data (in the DREAM4 10-gene challenge). The covariance of the prior distribution of $x_{ss}$ is the sample covariance of this data, divided by the number of the steady state measurements. This corresponds to the sample covariance of the mean. We assume that at the steady state, it holds that $f_i(x_{ss})=v_{i,ss}$ where $v_{i,ss} \sim N(0,M_{i,ss})$, and at the knockout and knockdown points $f_i(x_{j,ko}) = v_{i,ko}$ where $v_{i,ko} \sim N(0,M_{i,ko})$. Also the covariances $M_{i,ss}$ and $M_{i,ko}$ are sampled, and they are given noninformative inverse gamma prior distributions. The incorporation of the knockout/knockdown data to $p(X|\theta)$ in \eqref{eq:pX} is done by 
replacing $\oX_i-\uX_i$, $\Delta\tau$, and $q_iI$ by
\[
\bm {\oX_i - \uX_i \\ 0}, \quad   \bm{\Delta\tau & \\ & I}, \quad \textup{and} \quad \bm{q_iI & & \\ & M_{i,ss} & \\ & & M_{i,ko}I},
\]
respectively,
and $K_i(\uX)$ is replaced by $K_i([\uX,x_{ss},y_{i,ko/kd}])$ where $y_{i,ko/kd}$ denotes the collection of all knockout/knockdown measurements except for the ko/kd of gene $i$.

\section{Benchmark data examples} \label{sec:results}

BINGO has been benchmarked using the data from the DREAM4 {in silico} network challenge,  simulated data from the circadian clock of the plant {\it Arabidopsis thaliana} with varying sampling rate and noise levels, as well as the IRMA {in vivo} dataset. In all the experiments, BINGO is compared with three new methods, dynGENIE3 \cite{dynGENIE3}, iCheMA \cite{Aderhold_mechanistic}, and ARNI \cite{ARNI}. They are designed for inference from time series data. In addition, DREAM4 and IRMA datasets have been used in benchmarking other methods, and some results found in the literature have been included in the comparison. Standard classifier scores are used for the comparison, namely the area under the receiver operating characteristic curve (AUROC) and the area under the precision-recall curve (AUPR). Self-regulation is always excluded as in the DREAM4 challenge. Results of the benchmark cases are illustrated in Figure~\ref{fig:results}, and discussed below.

\begin{figure}[t]
\center 
\mbox{\hspace{-27mm}
\includegraphics[width=4.5cm]{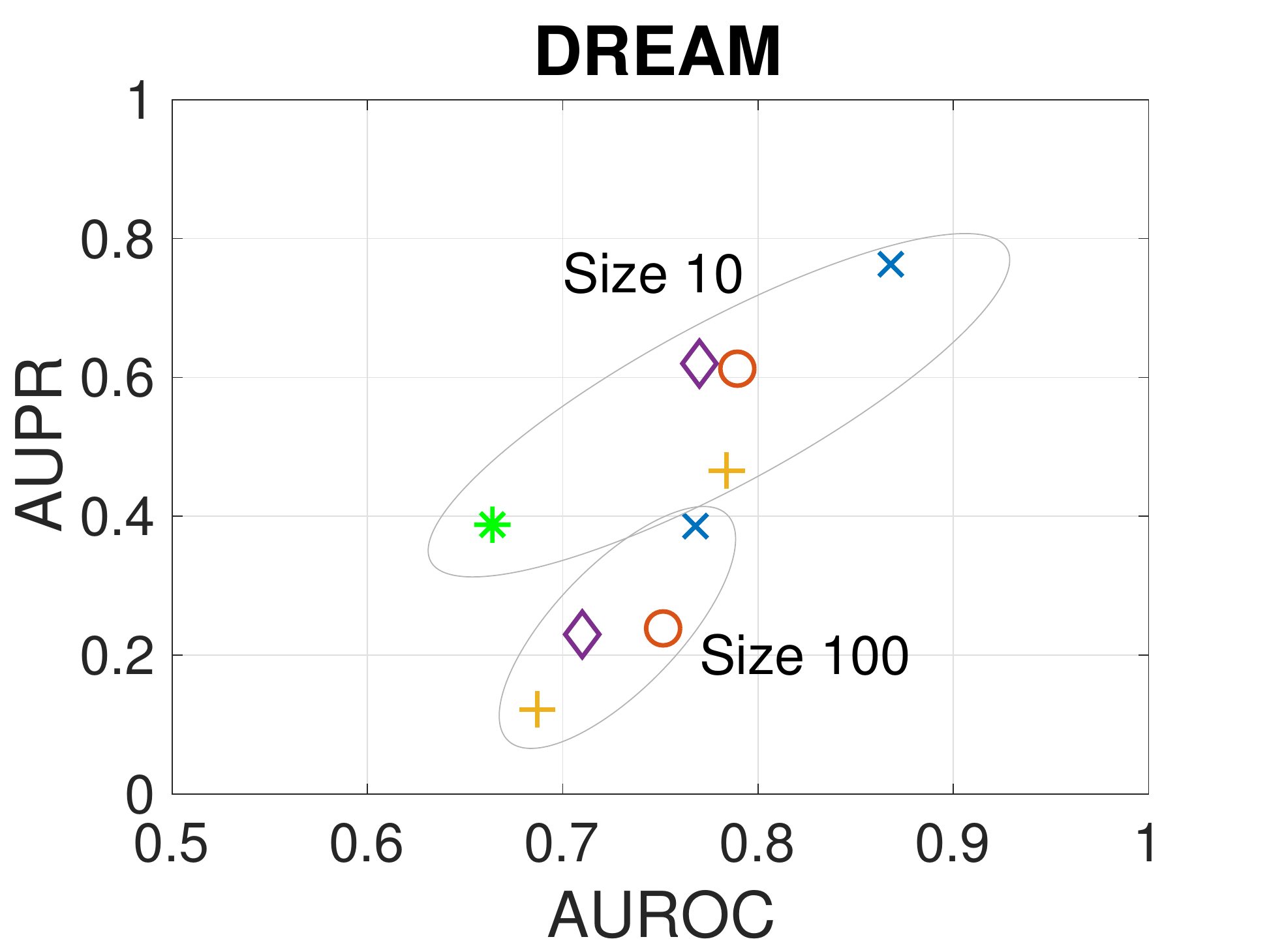}
\hspace{-3mm}
\includegraphics[width=4.5cm]{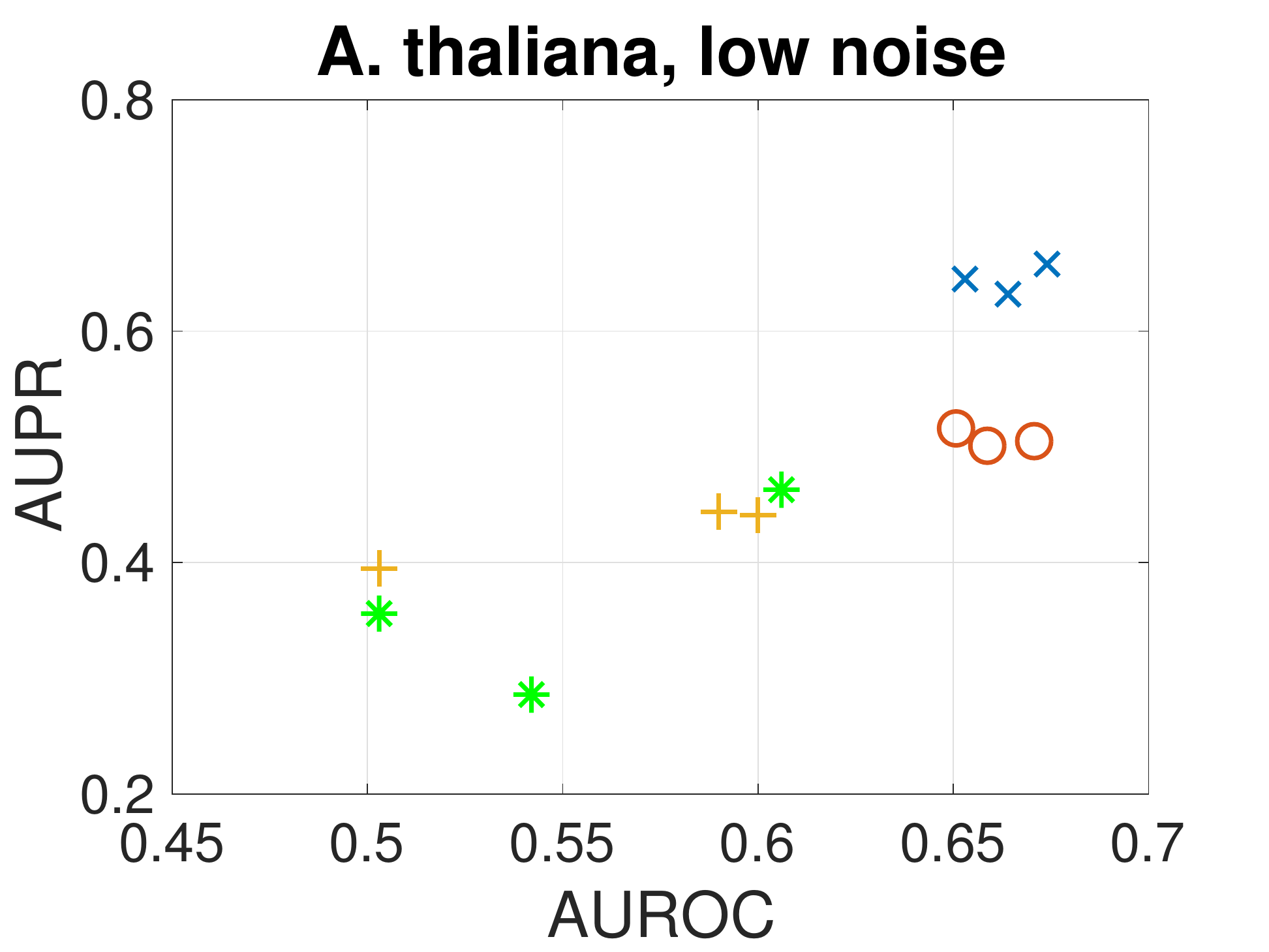}
\hspace{-3mm}
\includegraphics[width=4.5cm]{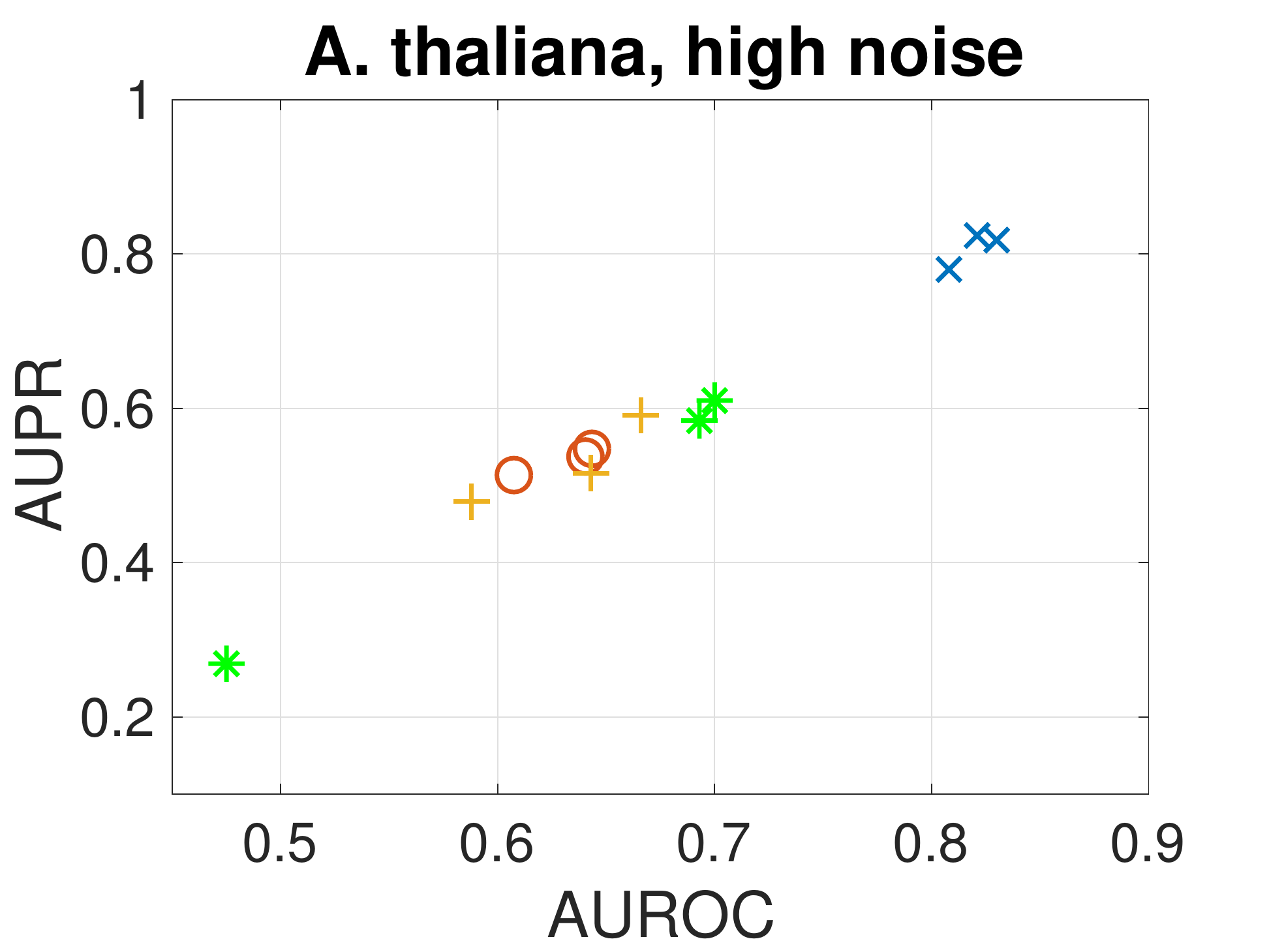}
\hspace{-3mm}
\includegraphics[width=4.5cm]{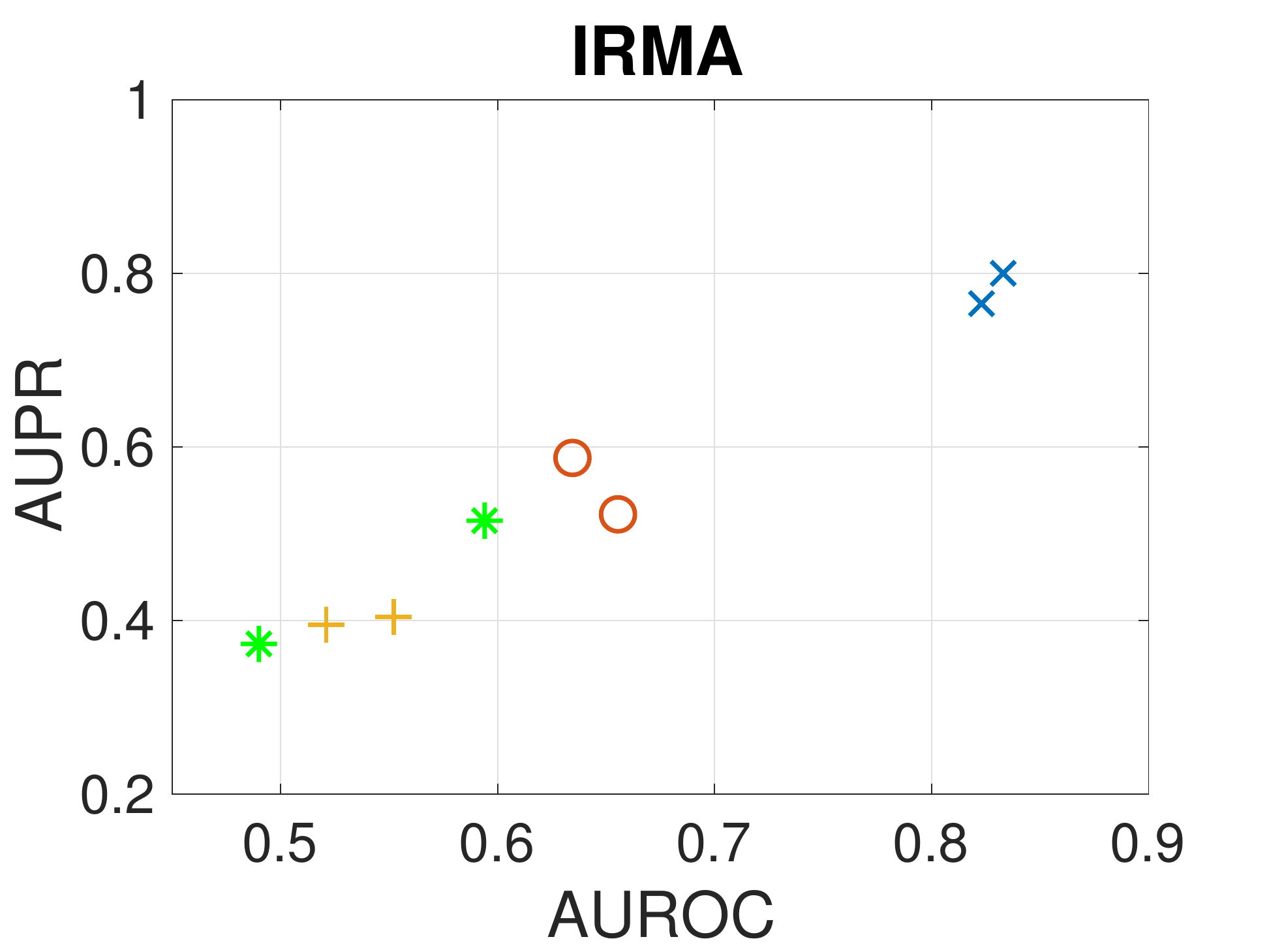}}
\includegraphics[width=11cm]{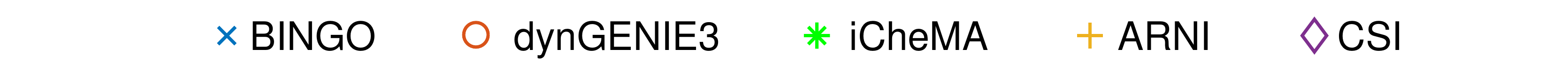}
\caption{Resulting AUROC/AUPR values in different experiments. The DREAM results consist of average values over the five networks using the time series data only. The results on the circadian clock of {\it Arabidopsis thaliana} consist of averages over ten replicates. The results for different sampling rates (1h/2h/4h) are shown separately. In the IRMA results, both full data and averaged data results are shown separately.}
\label{fig:results}
\end{figure}

\subsection{DREAM4 {in silico} network challenge}

The DREAM4 {in silico} challenge consists of network inference tasks with network sizes 10 and 100, with five networks in each size. The data consist of five time series for each 10-gene network and ten time series for each 100-gene network, where different perturbations have been applied on some genes for the first half of the time. The time series illustrate the system's adaptation to the perturbation, and its relaxation when the perturbation is removed. Each time series consists of 21 time points.
In addition, steady state values are provided in the dataset as well as gene knockout and knockdown data corresponding to each gene. For the 10-gene challenge, multifactorial data are provided, which correspond to steady state values under mild perturbations on the basal transcription rate. This corresponds to data collected from different cells, for example.

BINGO is compared with the challenge best performers using all available data, and with other methods using only the time series data. 
The 10-gene challenge winner, \emph{Petri Nets with Fuzzy Logic} (PNFL) is introduced in \cite{PNFL}. The 100-gene challenge winner is introduced in \cite{100winner}. The method is based only on the knockout data, with some post-processing. A similar scoring method without post-processing, the \emph{median-corrected Z-score} (MCZ) method \cite{Combination} achieved the second highest score in the 100-gene challenge.

Methods inferring GRNs from only time series data are reviewed in \cite{Penfold_review}, where the best performer (in terms of average AUPR value) was a method called \emph{Causal Structure Identification} (CSI) \cite{Klemm,CSI_imp}, which is based on Gaussian process regression as well. We include the discrete-time version of CSI in the comparison, since its performance was better.  
As suggested in \cite{Combination}, any method inferring networks from time series data can be combined with a method inferring GRNs from steady state data, such as the MCZ. Unfortunately, the MCZ requires knockouts or knockdowns of all genes, which can hardly be expected in a real experiment. Nevertheless, the combinations dynGENIE3*MCZ and BINGO*MCZ are included in the full data comparison. The scores for the combined methods are the products of the individual scores, favouring links that score high in both methods. It should be noted that BINGO (as well as the PNFL) can utilise also partial knockout data together with time series data. 
The results on the DREAM4 data are summarised in Table~\ref{tab:DREAM_results}.


\begin{table*}[b]
\footnotesize
\vspace{-2mm}
\caption{AUROC/AUPR values for the DREAM4 {in silico} 10-gene (above) and 100-gene (below) network inference challenge data, using either all data or only time series data. The values for PNFL and the 100-gene challenge winner are taken from \cite{DREAM_web}, for dynGENIE3*MCZ from \cite[Suppl. information]{dynGENIE3}, and for CSI from \cite[Table~1]{Penfold_review} (see Remark~\ref{rmk:CSI}). The MCZ method we implemented ourselves, and for the dynGENIE3, iCheMA and ARNI results, the codes provided by the authors of \cite{dynGENIE3}, \cite{Aderhold_mechanistic} and \cite{ARNI}, respectively, were used (see Remark~\ref{rmk:ARNI}).}
    \center
    \mbox{ \hspace{-22mm}
     \begin{tabular}{|l|llr@{\hspace{.4mm}/\hspace{.3mm}}l r@{\hspace{.4mm}/\hspace{.3mm}}l r@{\hspace{.4mm}/\hspace{.3mm}}l r@{\hspace{.4mm}/\hspace{.3mm}}l r@{\hspace{.4mm}/\hspace{.3mm}}l r@{\hspace{.4mm}/\hspace{.3mm}}l|}
    \hline
     \parbox[t]{2mm}{\multirow{12}{*}{\rotatebox[origin=c]{90}{Size 10}}} &
   Data & Method & \multicolumn{2}{c}{Network 1} & \multicolumn{2}{c}{Network 2} & \multicolumn{2}{c}{Network 3} & \multicolumn{2}{c}{Network 4} & \multicolumn{2}{c}{Network 5}   & \multicolumn{2}{c|}{Average} \\
    \cline{2-15}
    & TS & BINGO & {.882} & {.829} & {.790}&{.704} & {.782}&{.567} & {.933}&{.835} & {.954}&{.882} & {.868}&{.763} \\
& & CSI  & (.72) & .64 & (.75)&.54 & (.67)&.45 & (.83)&.67 & (.90)&.78 & (.77)&.62 \\
& & dynGENIE3 & .743 & .551 & .715 & .463 & .765 &.543 & .802 &.706 & .923 &.790 & .790&.611 \\
& & iCheMA & .576 & .401 & .733 & .445 & {.770} & .464 & .563 & .273 & .677 & .357 & .664 & .388 \\
& & ARNI & .835 & .682 & .779 & .626 & .665 & .280 & .768 & .387 & .873 & .355 & .784 & .466 \\ \cline{2-15}
& All &BINGO & .941 & .854 & {.877} & {.779} & .936&.787 & {.957}&{.862} & {.928}&{.830} & .928&{.822} \\
& & PNFL & {.972} &{.916} & .841&.547 & {.990}&{.968} & .954&.852 & .928&.761 & {.937}&.809 \\ & & dynGENIE3*MCZ & NA & .82 & NA&.60 & NA&.80 & NA&.77 &  NA&.59 & NA&.72 \\ 
& & BINGO*MCZ &{.972}&.865&.854&.703&.893&.738&{.966}&{.909}&{.938}&{.846}&.925&.812 \\  
   \cline{2-15}
  &  KO+KD & MCZ &.941&.813&.728&.306&.832&.662&.923&.713&.717&.391&.828&.577 \\ \hline
\multicolumn{15}{c}{ \vspace{-2mm}}  \\   
    \hline
    \parbox[t]{2mm}{\multirow{10}{*}{\rotatebox[origin=c]{90}{Size 100}}} &  
   Data & Method & \multicolumn{2}{c}{Network 1} & \multicolumn{2}{c}{Network 2} & \multicolumn{2}{c}{Network 3} & \multicolumn{2}{c}{Network 4} & \multicolumn{2}{c}{Network 5}   & \multicolumn{2}{c|}{Average} \\ \cline{2-15}
   & TS & BINGO & .816&{.447} & .741&{.296} & .781&{.345} & .787&{.407} & .807&{.438} & .786&{.386}  \\
 & & dynGENIE3 & .789 &.276 & .700 &.175 & .770 &.271 & .736 &.248 & .766 &.214 & .752 &.237 \\
 & & CSI & .71&.25 & .67&.17 & .71&.25 & .74&.24 & .73&.26 & .71&.23 \\
 & & ARNI & .726 &.159 & .641 & .098 & .689 & .109 & .683 & .129 & .696 & .116 & .687 & .122 \\ \cline{2-15}
 & TS+KO \hspace{.8mm} & BINGO & .857 & .485 & .750 & .322 & .796 & .404 & .819 & .435 & .828 & {.456} & .810 & .420 \\
& All & BINGO & .823 & .404 & .725 & .243 & .770 & .299 & .777 & .325 & .788 & .296 & .777 & .313 \\
& & DREAM4 winner & .914&.536  & .801&.377 & .833&.390 & .842&.349 & .759&.213  & .830&.373 \\ & & dynGENIE3*MCZ \hspace{.25mm} & NA&{.60} & NA&{.43} & NA&{.47} & NA&{.52} &  NA&.37 & NA&{.48} \\
 & & BINGO*MCZ &.911&{.588}&.813&.400&.870&{.447}&.856&{.510}&.850&{.464}&.860&{.482} \\ 
    \hline
  \end{tabular}}  
  \label{tab:DREAM_results}
\end{table*}

\subsubsection{The 10-gene network results}

BINGO consistently outperforms other methods by a large margin (with the exception of network 3) in GRN inference from time series data. 
When using all data from the challenge, BINGO scores a little bit higher (average AUPR) than the DREAM4 10-gene challenge winner PNFL. The average scores are very close to each other but in the different networks there are some rather significant differences. BINGO reaches a fairly high AUPR in network 2, which seemed to be very difficult for all challenge participants. The best AUPR for network 2 among the challenge participants was 0.660, and the PNFL's 0.547 was the second highest \cite{DREAM_web}. The poor performance of most methods with network 2 is attributed to low effector gene levels in the wild type measurement~\cite{PNFL}. In contrast, BINGO's performance is less satisfactory with network 3, where the PNFL achieves almost perfect reconstruction. This might be due to a fairly high in-degree (four) of two nodes in the true network. Only one out of eight of these links gets higher confidence value than 0.5 assigned by BINGO. Based on Table~\ref{tab:DREAM_results} and \cite[Table~1]{Penfold_review}, network 3 also seems to be the one where the knockout data has the biggest impact. It may be that the PNFL makes better use of this data. Also the BINGO*MCZ combination scores fairly well with network 3, but in network 2 it loses clearly to BINGO applied to all data directly.


\subsubsection{The 100-gene network results}

As in the 10-gene case, BINGO outperforms its competitors by a clear margin in all five networks when inferring the networks from time series data alone, and in fact, it scores slightly higher than the DREAM4 challenge winner. iCheMA is excluded from this comparison due to its poor scalability to high dimension.

When using all data, the combination BINGO*MCZ is the best performer, tied with the combination dynGENIE3*MCZ. It seems that with the 100-gene network, BINGO cannot always combine different types of data in an optimal way. This may be due to the large number of steady state points where the dynamics function $f$ should vanish. This hypothesis is supported by the fact that the results actually deteriorate when also the knockdown data is included as opposed to using only the knockout data with the time series data. 
 It should be noted that both the DREAM4 winner as well as the MCZ are based solely on the knockout and knockdown data, but their implementation requires knockout of every gene, which is hardly realistic in a real experiment.

\begin{rmk} \label{rmk:CSI}
The AUROC and AUPR values for the method CSI are taken from \cite{Penfold_review}, where the self interactions are included in the computation of these values. The self interactions are given a weight zero, and hence all methods get 10 or 100 ``free" true negatives, depending on the network size. This has some increasing effect on the AUROC values for networks of size 10 (they report mean AUROC of 0.55 for random networks as opposed to 0.5). The effect on the 100-gene network results and on all AUPR values is negligible.
\end{rmk}

\begin{rmk} \label{rmk:ARNI}

In the ARNI method, the user has to choose the type and the order of basis functions. In the DREAM 10-gene case, we tried all basis function sets provided in their Matlab implementation with a variety of orders, and the best performing combinations were tried with the 100-gene case. The best performance overall was achieved with polynomial basis functions with degree 3. The values reported in Table~\ref{tab:DREAM_results}  are obtained with these basis functions. In the article \cite{ARNI}, a method for basis function selection has been introduced, but it was not implemented.

The ARNI method considers a regression problem with input-output pairs $\left(\frac{y_j+y_{j+1}}2,\frac{y_{j+1}-y_j}{\Delta t}\right)$ where $\{y_j\}$ is the time series data. We made a small modification to the implementation by replacing the inputs by $y_j$ which improved the method's performance.

We could not reproduce exactly the dynGENIE3 results for the DREAM4 in silico network inference challenge data reported in \cite{dynGENIE3}. We obtained similar results, but the scores for the different networks varied from the reported scores. Finally, we decided to include results from our own simulations taking into account the perturbations, whereby the results improved slightly. The inputs were incorporated by including five (or ten in the 100-gene case) additional signals to the time series, of which the $j^{\textup{th}}$ signal consisted of 10 ones and 11 zeros in the $j^{\textup{th}}$ experiment, and only zeros in other experiments.

We used the ``random forest" option with $K=n$ in the DREAM4 experiment (as in \cite{dynGENIE3}), but in other experiments we used $K=\sqrt{n}$ which is the default setting in the dynGENIE3 code.
\end{rmk}

\subsection{Circadian clock of {\it Arabidopsis Thaliana}}

Realistic data were simulated from the so-called Millar 10 model of the {\it Arabidopsis thaliana} circadian clock \cite{Millar10}, using the Gillespie method \cite{Gillespie} to account for the intrinsic molecular noise. This model has been widely used to study the plant circadian clock and as a benchmark to assess the accuracy of different network inference strategies \cite{Aderhold_mechanistic}. It simulates gene expressions and protein concentrations time series with rhythms of about 24 hours. The gene regulatory structure consists in a three-loop feedback system of seven genes and their corresponding proteins for which the chemical interactions are described using Michaelis--Menten dynamics. The model has been simulated for 600 hours in 24-hour light/dark cycles to remove all possible transients. Then, the photoperiodic regime was switched to constant light. Ten replicates were simulated and the first 48 hours of the constant light phase was recorded and downsampled to correspond to sampling intervals of 1 hour, 2 hours, or 4 hours. The time series therefore consist of 49, 25, or 13 time points depending on the sampling interval. Two datasets were simulated with different levels of process noise.

Table~\ref{tab:results_Millar} shows the mean AUROC/AUPR values with standard deviations for the methods computed from the ten replicates. BINGO and dynGENIE3 are hardly affected by the decreasing sampling frequency. With less process noise, the AUROC values for these two methods are very close to each other in all cases, but BINGO has somewhat better precision throughout the tested sampling frequencies. With higher process noise, the results of BINGO improve clearly. The iCheMA and ARNI results with 4h sampling rates and 2h sampling rates with low process noise are not much better than random guessing.



\begin{table*}[t]
\caption{Means and standard deviations of AUROC/AUPR values for the simulated circadian clock data with ten replicates.}
    \mbox{ \hspace{-22mm}
  \begin{tabular}{|l|l r@{\hspace{.4mm}/\hspace{.3mm}}l r@{\hspace{.4mm}/\hspace{.3mm}}l r@{\hspace{.4mm}/\hspace{.3mm}}l|}
    \hline
  \parbox[t]{2mm}{\multirow{5}{*}{\rotatebox[origin=c]{90}{Low noise}}} &  Method & \multicolumn{2}{c}{1h sampling} & \multicolumn{2}{c}{2h sampling} & \multicolumn{2}{c|}{4h sampling} \\ \cline{2-8}
& BINGO & {.674} $\pm$ .052 & {.658} $\pm$ .069 & .653 $\pm$ .061 & {.645} $\pm$ .060 & {.664} $\pm$ .060 & {.632} $\pm$ .068 \\
& dynGENIE3 & {.659} $\pm$ .025 & .500 $\pm$ .054 & {.671} $\pm$ .039 & .504 $\pm$ .048 & .651 $\pm$ .043 & .515 $\pm$ .060   \\
& iCheMA & .606 $\pm$ .061 & .463 $\pm$ .068 & .503 $\pm$ .096 & .356 $\pm$ .059 & .542 $\pm$ .120 & .286 $\pm$ .045  \\
& ARNI & .590 $\pm$ .074 & .444 $\pm$ .065 & .600 $\pm$ .069 & .441 $\pm$ .058 & .503 $\pm$ .055 & .395 $\pm$ .062 \\
    \hline
\multicolumn{8}{c}{ \vspace{-2mm}}  \\   
    \hline    
    \parbox[t]{2mm}{\multirow{5}{*}{\rotatebox[origin=c]{90}{High noise}}} &  Method & \multicolumn{2}{c}{1h sampling} & \multicolumn{2}{c}{2h sampling} & \multicolumn{2}{c|}{4h sampling} \\ \cline{2-8}
& BINGO & {.821} $\pm$ .040 & {.824} $\pm$ .037 & {.830} $\pm$ .032 & {.818} $\pm$ .040 & {.808} $\pm$ .035 & {.780}  $\pm$ .038  \\
& dynGENIE3 & .641 $\pm$ .027 & .536 $\pm$ .027 & .644 $\pm$ .039 & .546 $\pm$ .048 & .608 $\pm$ .101  & .512 $\pm$ .091   \\
& iCheMA & .693 $\pm$ .054 & .584 $\pm$ .045 & .700 $\pm$ .061  & .610 $\pm$ .055 & .475 $\pm$ .098  &  .269 $\pm$ .041   \\
& ARNI & .666 $\pm$ .051 & .591 $\pm$ .071 & .643 $\pm$ .074 & .516 $\pm$ .080 & .588 $\pm$ .055 & .479 $\pm$ .094 \\
    \hline
  \end{tabular}}
  \label{tab:results_Millar}
  \end{table*}

%

\subsection{{In vivo} dataset IRMA}

A synthetic network was constructed in \cite{IRMA} with the purpose of creating an {in vivo} dataset with known ground truth network for benchmarking network inference and modelling approaches. The network is rather small, consisting of only five genes and eight links in the ground truth network. Nevertheless, this dataset can be used to verify the performance of BINGO  using real data.

The IRMA network can be ``switched on" and ``off" by keeping the cells in galactose or glucose, respectively. The dataset consists of nine transient time series, where the network is either switched on (five time series) or off (four time series) at the beginning of the experiment. These have been averaged into one switch-on time series (with 16 time points with 20 minute sampling interval) and one switch-off time series (with 20 time points with 10 minute sampling interval). Typically only the two average time series have been used, but we try BINGO with both the two average time series, as well as with all the nine experiments separately.

 \begin{table}[t]
  \caption{AUROC/AUPR values for the IRMA dataset using either the two averaged time series or all nine time series.}
  \begin{tabular}{|l r@{\hspace{.4mm}/\hspace{.3mm}}l r@{\hspace{.4mm}/\hspace{.3mm}}l|}
    \hline
    Method & \multicolumn{2}{c}{Avg. data} & \multicolumn{2}{c|}{Full data} \\ \hline
 BINGO & .833 & .800  & .823 & .765 \\
 dynGENIE3 & .635 & .586 & .656 & .521 \\
 iCheMA & .490 & .373 & .594 & .515 \\
 ARNI & .521 & .395 & .552 & .404 \\
    \hline
  \end{tabular}
  \label{tab:results_IRMA}
\end{table}

The results in terms of AUROC/AUPR scores are presented in Table~\ref{tab:results_IRMA}. Moreover, the dataset has been used in other recent articles presenting methods ELM-GRNNminer \cite{ELM}, and the TimeDelay-ARACNE \cite{TD-ARACNE}. However, they only report one network structure as opposed to a list of links with confidence scores. Therefore it is not possible to calculate AUROC/AUPR scores for these methods, but it is possible to represent their predictions as points with the ROC and precision-recall curves obtained for BINGO and dynGENIE3, presented in Figure~\ref{fig:IRMA}. With such small network, the AUROC and AUPR values are very sensitive to small differences in predictions. The best predicted network using the averaged data has five out of eight links correct, and one false positive. The best predictions from the dynGENIE3 with the same data have either four correct links with one false positive or five correct links with three false positives. However, it is not evident if these best predictions can be concluded from the results. With BINGO it is possible to look at the histogram of the posterior probabilities of all possible links,  shown in Figure~\ref{fig:IRMA_histogram}. In the averaged data case, the best prediction with five true links with one false positive stands out relatively well. Using the full data, there are three false positives that get confidence of over 0.9.


\begin{figure}[t]
\center
\mbox{\hspace{-5mm}
\includegraphics[width=13cm]{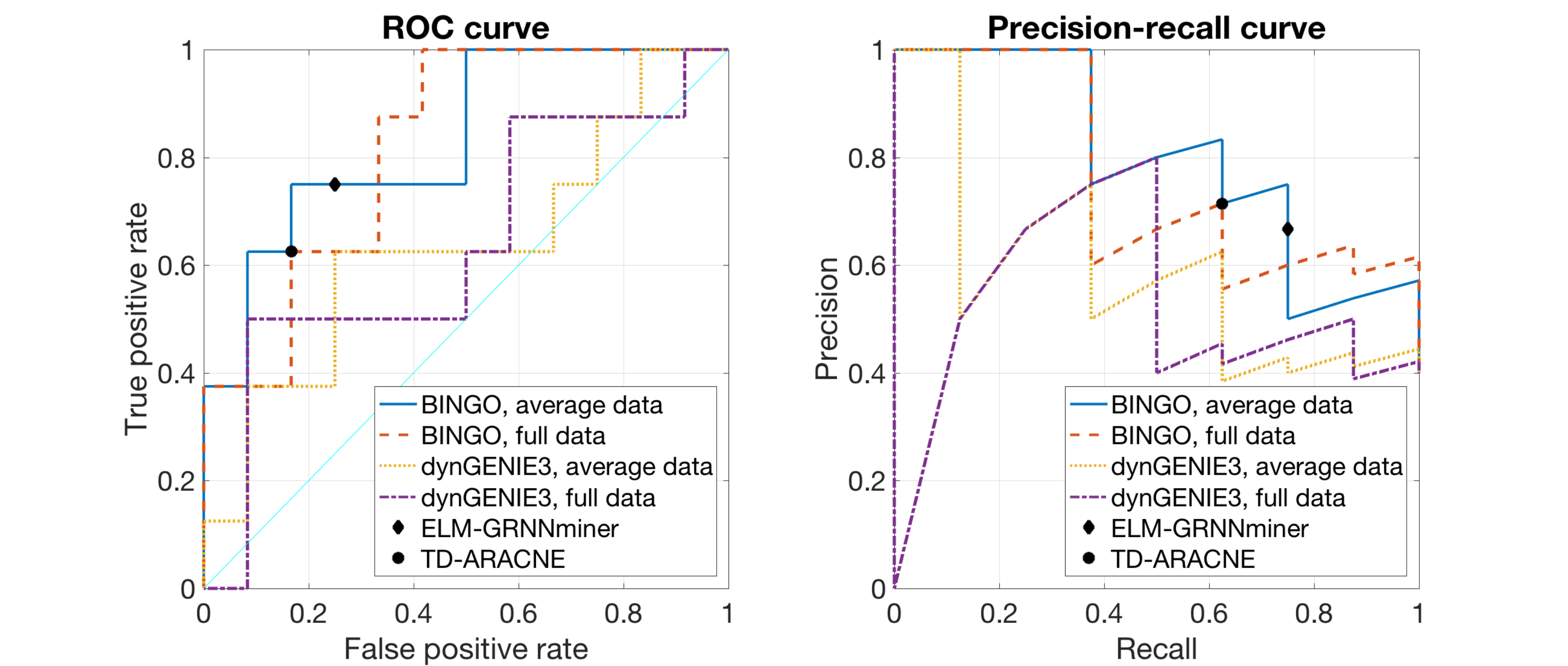} }
\caption{The ROC and precision-recall curves for BINGO and dynGENIE3 using either all nine time series, or the two averaged time series together with predictions from the ELM-GRNNminer (obtained from \cite[Figure~4]{ELM}) and the TD-ARACNE (from \cite[Figure~5]{TD-ARACNE}).}
\label{fig:IRMA}\end{figure}

\begin{figure}[t]
\center
\mbox{\hspace{-2mm}
\includegraphics[width=6cm]{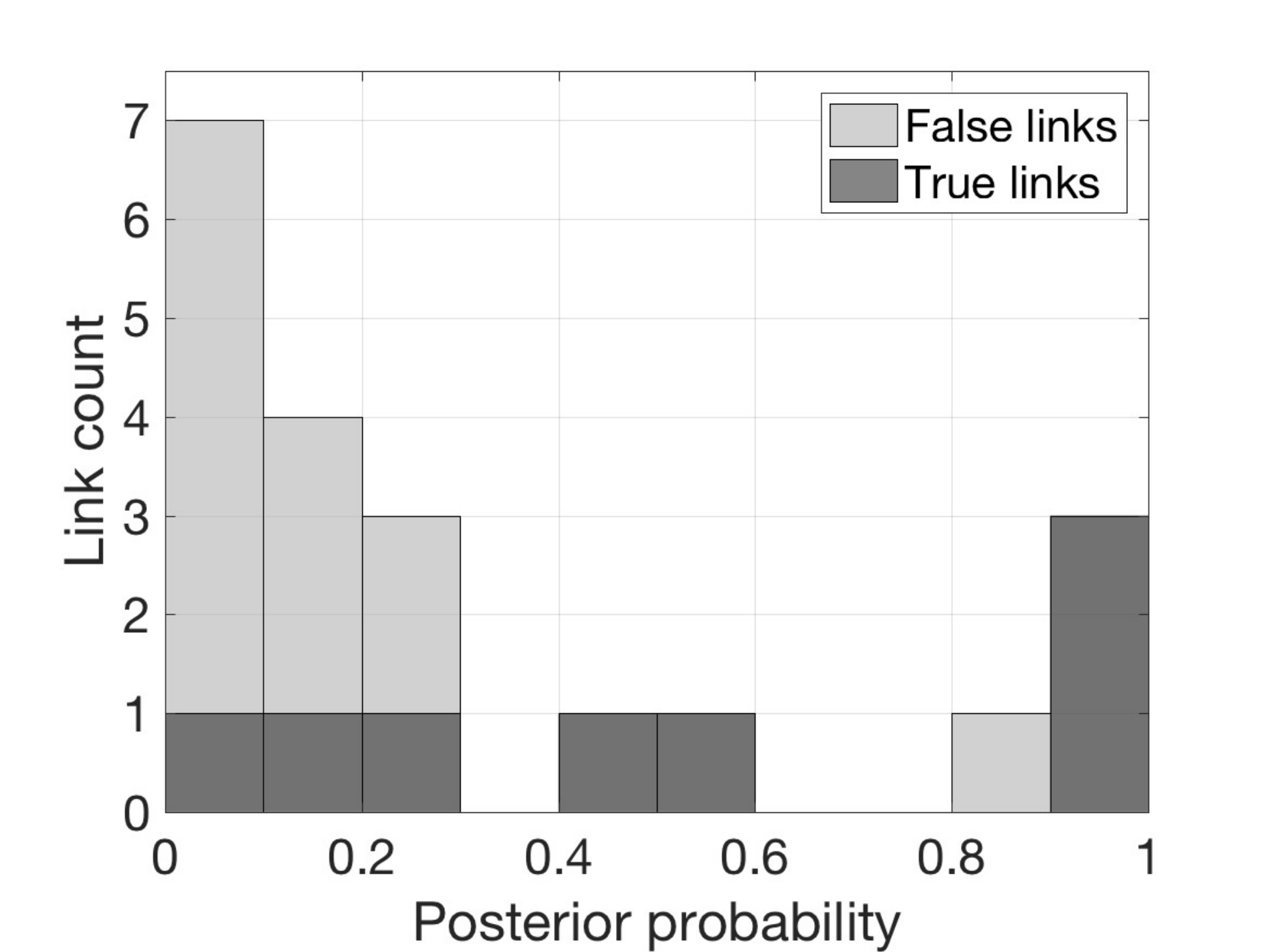} 
\hspace{-5mm}
\includegraphics[width=6cm]{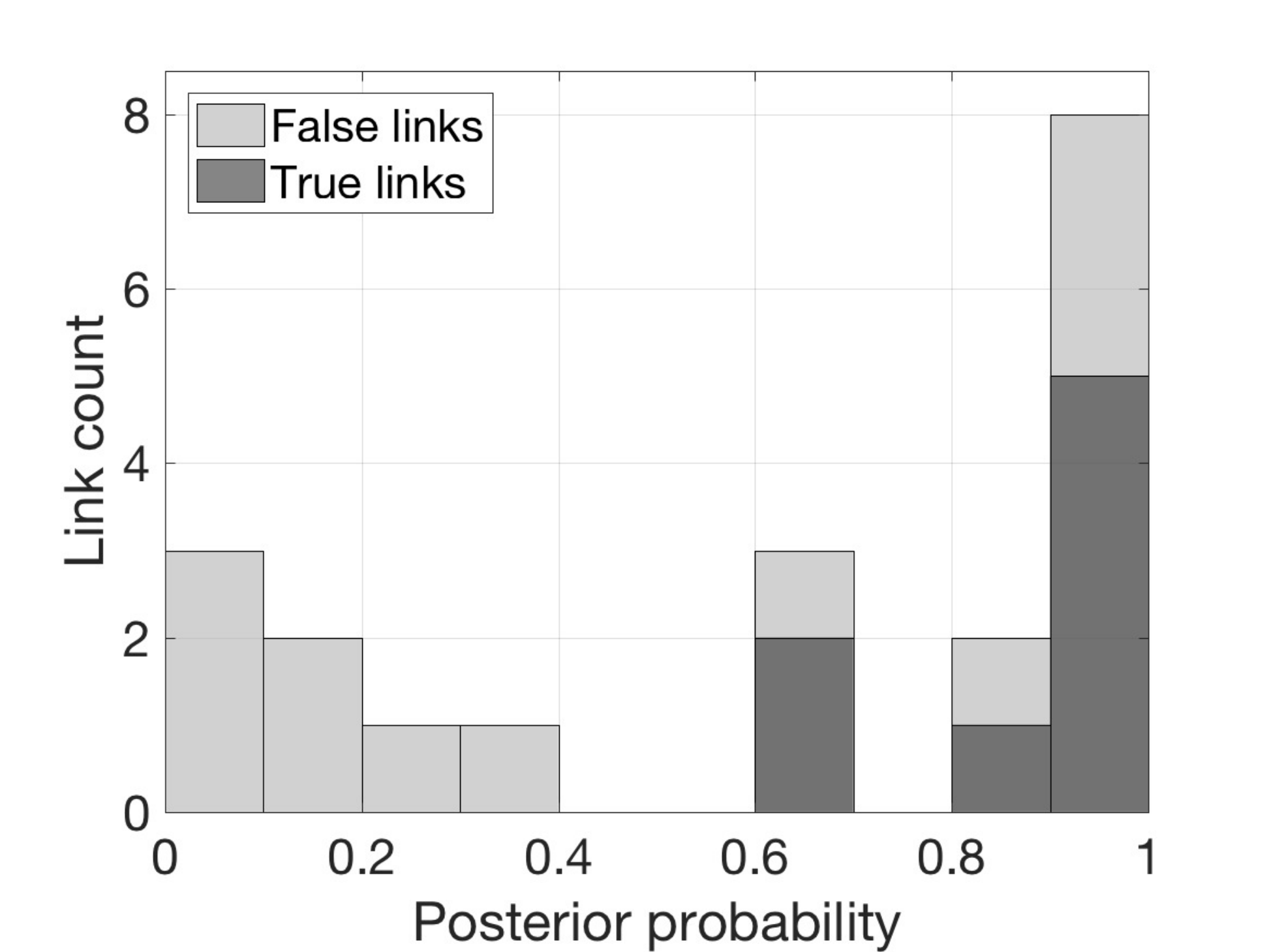} \hspace{-2mm}}
\caption{The histograms of posterior probabilities of all links for the averaged (left) and full (right) IRMA data. In the average data case, the prediction with three true positives and no false positives is obtained with threshold between 0.86 and 0.97. The best prediction with five true and one false positive is obtained with threshold between 0.25 and 0.45. In the full data case, the likely predictions are six true and four false positives with threshold between 0.66 and 0.85, and eight true positives (all) and five false positives with threshold 0.39 and 0.60.}
\label{fig:IRMA_histogram}
\end{figure}

\begin{rmk} \label{rmk:ELN}

In the analysis of the results, we have ignored self-regulation as was done in the DREAM4 challenge. Therefore, in the IRMA network, the maximum number of links is 20 (in \cite{TD-ARACNE,ELM} it is 25). Moreover, it seems that in \cite{ELM} one link (Gal4 $\to$ Swi5) has been omitted from the ground truth. With our criteria, the ELM-GRNNminer had 6 true positives and 3 false positives, and TD-ARACNE had 5 true positives and 2 false positives.

\end{rmk}

\section{Discussion} \label{sec:discussion}

A nonparametric method BINGO for gene regulatory network inference was presented, which is based on the continuous-time Gaussian process dynamical model. We also presented theory behind the continuous-time GPDM. The Gaussian process framework has proven very useful in nonlinear regression problems arising in machine learning. Due to the analytical tractability of Gaussian processes, it is possible to obtain a probability distribution for the trajectories of the GPDM. Such distribution allows MCMC sampling of the continuous trajectories, thereby bypassing a caveat of estimating derivatives from time series with low sampling frequency---a far too common procedure in existing GRN inference methods.

BINGO was favourably compared to state-of-the-art methods in GRN inference from time series data in various examples. In particular, it was demonstrated that the approach based on sampling continuous gene expression trajectories is good for handling time series data with low sampling frequency. Moreover, it was shown that the method can integrate steady state data with time series data to improve performance. BINGO was also successfully applied on real biological data.

BINGO is computationally heavier than dynGENIE3, for example, which is among the best methods in terms of scalability to large dimensions. However, given the time, effort, and cost of a gene expression experiment, the computation time is hardly as important as the accuracy of predictions, as long as the method is scalable to high enough system dimension. A MCMC approach is perfectly parellelisable: independent chains are run on different processors, and the collected samples are pooled together in the end. Parallelisation allows inference of networks of even a couple of thousands of variables. To test scalability, BINGO was applied on a data with dimension 2000, consisting of five time series of 21 time points each. With this size, network inference could be carried out overnight (see Remark~\ref{rmk:scalability}).

Recently developed so-called single-cell techniques enable gene expression measurements in one cell resolution for a large number of cells at a time. The cell is destroyed in the measurement process, and therefore the data consist of ensemble snapshots rather than time series. It is possible to obtain so-called pseudotime series from such data \cite{monocle,GP_pseudotime}, and BINGO can be used on such time series---although a small modification in fitting the trajectory samples to the measurements is required, due to the large amount of measurements typically obtained from single-cell measurements. The method can also be integrated with a pseudotime estimator, but this is left for future development.

Interesting future research topics include applying BINGO to solve different biological and biomedical real-data problems. From theoretical perspective, it would be desirable to relax smoothness requirements and to consider process noise with memory and/or dependence on the system's state, which is also more realistic from the application point of view \cite{Gillespie}.

\appendix

\section{Efficient sampling schemes} \label{app:sampling}

\subsection{Pseudo-input scheme}
Gaussian process regression suffers from a very unfavourable scaling of the computational load with respect to the number of data points. This problem is further aggravated by our scheme, where the number of data points used in the GP regression is in fact the number of discretisation points in the continuous time trajectory. However, we can resort to a pseudo-input scheme, where this scaling becomes linear.

In the pseudo-input scheme \cite{pseudo_input}, the underlying Gaussian process $f$ is characterised through so-called pseudo-data $P:=\{(\bar x_j,\bar f_j)\}_{j=1}^p$, where $\bar f_j = f(\bar x_j)$.  The number of pseudo-inputs $p$ is specified by the user, based on the available computing power and the size of the original problem. The pseudo-inputs are not related to the inputs of the actual data, but instead they can be considered as hyperparameters, and they can be estimated by a maximum likelihood approach or they can be sampled as well. Another approach is to use only a subset of the actual input-output data (a so-called active set) in the regression \cite{pseudo_input_Seeger}. We use the pseudoinput approach of \cite{pseudo_input}, but the main idea then follows \cite{pseudo_input_Seeger}, that is the value $f(x)$ at a generic point $x$ is approximated by $\Ex(f(x)|P)$. When the pseudo-outputs $\bar f_j$ are integrated out, the approximation leads to replacement of the matrices $K_i(\uX)$ in \eqref{eq:pX} by
\[
K_i(\uX) \approx K_i(\uX,P)K_i(P)^{-1}K_i(\uX,P)^{\top},
\]
where $K_i(\uX,P) \in \mathbb{R}^{M \times p}$ is a matrix whose element $(j,k)$ is $k_i(X_{\tau_{j-1}},\bar x_k)$. Similarly $K_i(P) \in \mathbb{R}^{p \times p}$ is a matrix whose element $(j,k)$ is $k_i(\bar x_j,\bar x_k)$. The approximation used in \cite{pseudo_input} is more accurate, but its computational cost is much higher when it is not used only for regression.

With this approximation, it is possible to use the Woodbury identity and the matrix determinant lemma again to obtain for the exponent in \eqref{eq:pX}
\begin{align*}
& \big(\Delta\tau K_i(\uX,P)K_i(P)^{-1}K_i(\uX,P)^{\top} \Delta\tau + q_i\Delta\tau \big)^{-1} \\
& = (q_i\Delta\tau)^{-1} - \frac1{q_i}K_i(\uX,P) \big( q_i K_i(P) + K_i(\uX,P)^{\top}\Delta\tau K_i(\uX,P) \big)^{-1}K_i(\uX,P)^{\top}. 
\end{align*}
Here $q_i\Delta\tau$ is a diagonal matrix and the full matrix inverse is computed for a $p \times p$ matrix instead of $M \times M$.
The downside is that the determinant term becomes
\begin{align*}
& \big|\Delta\tau K_i(\uX,P)K_i(P)^{-1}K_i(\uX,P)^{\top} \Delta\tau + q_i\Delta\tau \big| \\
& = |K_i(P)|^{-1} |q_i\Delta\tau| \Big| K_i(P)+\frac1{q_i}K_i(\uX,P)^{\top}\Delta\tau K_i(\uX,P) \Big|
\end{align*}
where $|K_i(P)|$ must be computed separately. Notice, for example, that $|K_i(P)|$ tends to zero if two pseudo-inputs tend to each other, so it has an effect of pushing the pseudo-input points apart from each other. In practical implementation, a small increment $\varepsilon I$ is added to the matrix $K_i(P)$ to ensure numerical stability. This corresponds to assuming that the pseudo-outputs $\bar f_j$ are corrupted by small noise (with variance $\varepsilon I$). We sample the pseudoinputs using random walk sampling, using a uniform prior for the pseudoinputs in the hypercube covering the actual data.

\subsection{Crank--Nicolson sampling} \label{sec:CN}

In the presented algorithm, the discretised trajectory $X$ is sampled using MCMC. When the discretisation is refined, the acceptation rate tends to decrease when conventional samplers are used. This can be avoided by Crank--Nicolson sampling \cite{Beskos_CN,CN_MCMC}, if the target distribution has a density with respect to a Gaussian measure,
\[
p(x)=\Phi(x)\mathcal{N}(x;m,P).
\]
The Crank--Nicolson sampling then works as follows. Assume the current sample is $x^{(l)}$. The candidate sample is $\hat x=m+\sqrt{1-\varepsilon^2}(x^{(l)}-m)+\varepsilon\xi$, where $\xi \sim \mathcal{N}(0,P)$. The new sample is then accepted with probability $\min\big\{1,\Phi(\hat x)/\Phi(x^{(l)}) \big\}$. The step length parameter $\varepsilon \in (0,1)$ is chosen by the user.

Crank--Nicolson sampling plays well along with the pseudo-input scheme. The term $(q_i\Delta\tau)^{-1}$ in the matrix inverse approximation above, and the term $|q_i\Delta\tau|$ in the determinant correspond exactly to the Wiener measure on the discretised trajectory. Notice that also the data fit term $p(Y|x,\theta)$ is Gaussian. In order to get a sampler producing reasonable trajectory candidates, we factorise the Wiener measure
\[
\mathcal{W}(dx)=\prod_{j=1}^m \mathcal{N}\big(x_{t_j}-x_{t_{j-1}};0,Q(t_j-t_{j-1})\big)\mathcal{B}_{(t_{j-1},t_j)}(dx),
\]
where $\mathcal{B}_{(t_{j-1},t_j)}(dx)$ is the Brownian bridge measure on interval $(t_{j-1},t_j)$, that is fixed to values $x_{t_{j-1}}$ and $x_{t_j}$ at the end points. Finally, the Gaussian measure that is used in the Crank--Nicolson sampler is
\[
\mathcal{N}(Y|x,\theta)\prod_{j=1}^m \mathcal{B}_{(t_{j-1},t_j)}(dx),
\]
and the factors $\prod_{j=1}^m \mathcal{N}\big(x_{t_j}-x_{t_{j-1}};0,Q(t_j-t_{j-1})\big)$ are implemented in the acceptance probability.

\section{Integration of the exponential function} \label{app:expint}

Consider the integral
\[
\int_{\mathbb{R}^N} \exp(-J(x))dx
\]
where
\[
J(x)=\ip{x,Ax}+\ip{b,x}+c,
\]
and $A$ is symmetric and positive definite. Now $J$ can be written as 
$$
J(x)=J_{\min}+\ip{x-x_{\min},A(x-x_{\min})}
$$ 
where $J_{\min}=\min_x J(x)$ and $x_{\min}$ is the (unique) vector attaining this minimum. Then
\begin{align*}
\int_{\mathbb{R}^N} \exp(-J(x))dx &= \exp(-J_{\min}) \int_{\mathbb{R}^N} \exp\big( -\ip{x-x_{\min},A(x-x_{\min})} \big) dx \\ & = \exp(-J_{\min}) \int_{\mathbb{R}^N} \exp\big( -\ip{x,Ax} \big) dx \\
& = \frac{\pi^{N/2}}{|A|^{1/2}} \exp(-J_{\min}).
\end{align*}
Finally, the minimum is
\[
J_{\min}=c-\frac14 \ip{b,A^{-1}b}.
\]
In the derivation of $p(X|\theta)$, this is applied so that
\[
\begin{cases}
A= \frac1{2q_i}\Delta\tau + \frac12 K_i(\uX)^{-1}, \\
b=-\frac1{q_i}(\oX_i-\uX_i), \\
c=\frac1{2q_i} \norm{\oX_i-\uX_i}_{\Delta \tau^{-1}}^2.
\end{cases}
\]

\section{Remarks on the implementation of BINGO} \label{sec:implementation}

Some details of the numerical examples are presented in Table~\ref{tab:simulations}. In the experiments, the time series were scaled so that the difference of the maximal and minimal expression value for each gene was one, so that parameter priors would be consistent across dimensions. The scaling is not completely necessary, since the priors are either scale free, or are scaled accordingly if either the data is scaled or the time axis is scaled. The priors for the parameters are as follows:
\begin{itemize}
\item Noninformative inverse gamma prior for the process noise covariance $q_i$, measurement noise covariance $r_i$, and the steady state covariance $M_{i,ss}$
\[
p(q_i) \propto \frac1{q_i^{1.001}}\exp\left(-\frac{0.00001}{q_i}\right),
\]
\item Exponential priors for $a_i \ge 0$, $b_i \ge 0$, and $\beta_{i,j}$
\[
p(a_i) \propto \exp\left(-\frac{a_i}{10V(Y_i)}\right), \quad p(b_i) \propto \exp\left(-\frac{b_i}{5V(Y_i)}\right), \quad p(H_{i,j}) \propto \exp\left(-\frac{H_{i,j}}{\textup{ran}(Y_j)} \right),
\]
where $V(Y_i)$ is the variation of $i^{\textup{th}}$ component of the trajectory per time unit (approximated from data), and $\textup{ran}(Y_j)$ is the range of the $j^{\textup{th}}$ trajectory:
\[
V(Y_i) = \frac1{t_m-t_0}\sum_{j=1}^m |[y_j]_i-[y_{j-1}]_i| \qquad \textup{and} \qquad \textup{ran}(Y_j)=\max_k [y_k]_j-\min_k [y_k]_j.
\]
Note that $\textup{ran}(Y_j)=1$ if the time series are scaled as described above.

\item Gamma prior (truncated) for $\gamma_i$
\[
p(\gamma_i) \propto \gamma_i \exp\left(-\frac{\gamma_i}{5\sigma(\Delta Y_i)}\right) (30-\gamma_i/\sigma(\Delta Y_i))
\]
where $\sigma(\Delta Y_i)$ an estimate of the variance of the derivative of the $i^{\textup{th}}$ component of the trajectory:
\[
\sigma(\Delta Y_i) = \frac1{m}\sum_{j=1}^m \left(\frac{[y_j]_i-[y_{j-1}]_i}{t_j-t_{j-1}}\right)^2.
\]
\item Inverse gamma prior for the knockout measurement covariance
\[
p(M_{i,ko}) \propto \frac1{M_{i,ko}^{N_{i,ko}/2}}\exp\left(-\frac{N_{i,ko}\sigma(\Delta Y_i)}{10M_{i,ko}}\right)
\]
where $N_{i,ko}$ is the number of knockout measurements taken into account when inferring links pointing to gene $i$.

Ideally also $M_{i,ko}$ should have a noninformative prior, but it was observed that this variable had a tendency to become either very small, thereby giving all weight to the knockout data and neglecting the time series data, or very large with the opposite effect. This might be due to some mismatch in the time series data and the knockout data. Nevertheless, using all data simultaneously still seemed to produce best results, but in order to achieve a good balance between both data types, the values for $M_{i,ko}$ have to be forced to a good range using an informative prior like this.
\end{itemize}

\begin{table}[t]
\footnotesize
\vspace{-2mm}
\caption{\footnotesize Simulation details on the benchmark examples. In DREAM4 size 100, three independent chains were run in parallel. The total number of sampling rounds is the burn-in length added to the number of samples multiplied by the thinning factor. The computational times are for inferring one network. They are obtained with a Macbook pro, 2.4 GHz Intel Core i7, except for the DREAM4, size 100 case, which is with Dell, 2.5 GHz Intel Xeon E5-2680 v3.}
    \mbox{
  \begin{tabular}{|l c c c c c|}
    \hline
    Experiment & $\eta$ & Burn-in  & Number of & thinning & computation \\
    & & & samples & factor & time (min) \\ \hline
 DREAM4, size 10 & 1/10 & 3000 & 10000 & 10 & 31 \\
  DREAM4, size 100 & 1/100 & 1500 & 3 $\times$ 3000 & 10 & 3 $\times$ 188 \\
 Circadian clock & 1/7 & 3000 & 6000 & 10 & (1h/2h/4h)  6/5/4     \\
 IRMA & 1/5 & 3000 & 10000 & 10 & (avg./full) 7/18 \\
    \hline
  \end{tabular}}
  \label{tab:simulations}
\end{table}

\begin{rmk} \label{rmk:scalability}
To test the BINGO's scalability, it was tried on the dataset obtained by concatenating the DREAM4 size 10 time series 200 times to obtain a dataset consisting of five time series with 21 time points with dimension 2000. With Macbook pro, 2.4 GHz Intel Core i7, it took 592 seconds to collect 50 samples with a discretisation level three times finer than the measurement discretisation. Parallelising to 20 processors with similar capacity, it would take nine hours to collect 5000 samples (with burn-in of 250 samples per chain, and thinning factor of 10). 
\end{rmk}


\end{document}